\theoremstyle{plain}
\newtheorem{theorem}{Theorem}
\newtheorem{corollary}{Corollary}
\theoremstyle{definition}
\theoremstyle{remark}
\title{Stability Preserving  Data-driven Models With Latent Dynamics}
\author{Yushuang Luo and  Xiantao Li  and Wenrui Hao}
\begin{document}

\begin{abstract}
 In this paper we introduce a data-driven modeling approach for dynamics problem with latent variables.  The state space of the proposed model includes artificial latent variables, in addition to observed variables that can be fit to a given data set. We present a model framework where the stability of the coupled dynamics can be easily enforced. The model is implemented by recurrent cells and trained using back propagation through time. Numerical examples using benchmark tests from order reduction problems demonstrate the stability of the model and the efficiency of the recurrent cell implementation. As applications, two fluid-structure interaction problems are considered to illustrate the accuracy and predictive capability of the model.
\end{abstract}

\maketitle

\section{Introduction}\label{sec: intro}

Forecasting the long-time behavior of  a complex system based on short-time data series 
 is a long-standing problem in many scientific domains, e.g., spacecraft designing \cite{juang1993identification} and meteorology \cite{khouider2010stochastic}. 
 One particular challenge is due to the fact that there are hidden (latent) dynamics that are not directly observed. More specifically, the dynamics of the observed quantities is the result of continuous interactions with the latent dynamics.  In addition, choosing an appropriate ODE model to fit is also crucial to the effectiveness of the method. One well established framework for constructing an effective model is reduced-order modeling (ROM), where one starts with an underlying full-order model (FOM), and derive an a reduced model, often by subspace projections \cite{freund1999reduced,Bai2002,antoulas2005approximation}. With certain guaranteed approximation properties, the reduced models are able to efficiently capture the input-output relation.    One can view this framework as a bottom-up approach in that the matrices in the FOM are accessible, and in this sense, they are intrusive. Furthermore, the ROMs can also be reformulated into a data-driven problem, e.g., by using the Loewner framework \cite{ionita2014data,karachalios2021loewner,antoulas2005approximation}, in conjunction with data in the frequency domain. Another alternative is the Proper Orthogonal Decomposition (POD), which makes use of the leading principal components computed from data \cite{berkooz1993proper, swischuk2019projection}. Overall, most methods for ROM are aimed to reproduce the input-output relation, where there is no feedback mechanism, i.e., the output has no influence on latent dynamics. 
 
 Meanwhile, many non-intrusive models have recently been developed. For example,  the dynamic mode decomposition (DMD) method \cite{schmid2010dynamic} uses linear regression to fit a linear ODE to the leading components of high dimensional time series data. The operator inference approach \cite{peherstorfer2016data} is an extension of DMD in the way that it adds polynomial matrix operators to the linear model. Sparse regressions are used in \cite{brunton2016discovering} to learn governing equations from a set of candidate terms. It has been shown that deep neural networks (DNN) are able to recover potentially non-autonomous dynamics from data \cite{churchill2022robust, qin2021data}. Another notable approach is the physics-informed neural networks (PINNs) \cite{raissi2019physics},  which may also be considered as non-intrusive since it assumes knowledge of physics constraints instead of the full underlying model. Finally, neural ODEs is another general methodology to represent and learn ODE models \cite{chen2018neuralode, rubanova2019latent}. Although the effectiveness of some of these methods are demonstrated without considering the influence of latent variables, in principle, one can append a linear latent dynamics, and used a regression method (e.g., the method in \cite{brunel2008parameter}) to determine the model parameters. 

 Motivated by the ROM technique, we set up ODE models that involve the dynamics of latent variables. But compared to standard ROM \cite{freund1999reduced,Bai2002,antoulas2005approximation,benner2015survey}, we make an important extension by introducing ``cross terms'', that model the interactions. Namely, the output from the latent dynamics contributes to the dynamics of the observed quantities, which in turn influence the latent dynamics. 

{The ability for an ODE model to predict long-term dynamics relies critically on the stability. Despite the aforementioned wide variety of ODE models that have been developed, identifying parameters that lead to stable solutions is under explored. Stability is often left alone or treated empirically \cite{vlachas2018data,pawar2020data,vlachas2020backpropagation}.  Although the stability, e.g., the linear stability, is easy to examine for a specific dynamical system, ensuring such stability in a parameter estimation algorithm is not a trivial task. In many cases, the stability condition can be traced to a stable matrix, i.e.,  the eigenvalues have non-positive real parts. Although there are algorithms to identify a nearest stable matrix  \cite{gillis2017computing, gillis2019approximating}, accessing the spectrum in a training algorithm is clearly not practical due to the added computational overhead. In ROM, the stability is often fixed after the model reduction procedure \cite{Bai2002, gosea2016stability}. But such fix can compromise the training accuracy. From a data driven perspective, unstable modes can not be suppressed by training on short time series data. Often observed is that the models can be fit with very small residual error, but when it is evaluated at later instances, the model becomes unstable, and it is unable to predict the long term dynamics.   
 
The stability in our proposed approach is maintained at two levels. First, the ODE model is constructed in a way that it comes naturally with a  Lyapunov functional.  This is an important departure from many existing frameworks. 
 Since our primary interest is the prediction of observed quantities, the specific interpretation of the latent variables is less relevant. In particular, we show that an orthogonal transformation of the latent dynamics does not change the parametric form of the model. A remarkable finding from this observation is that with a particular choice of the orthogonal transformation, the symmetric part of the stable matrix becomes diagonal, which offers a simple procedure to enforce stability. 
In practice, instability can emerge from either a time-continuous model or its discretization. Thus the second part of our approach is to continue to maintain stability at the discrete level, where we consider an implicit mid-point method discretization of the continuous model, and we prove that the discrete model inherits the stability the continuous model. 
 
 }

Part of our model is represented in a network structure. The discrete model is implemented by a recurrent cell, making it efficient to train thanks to back propagation through time (BPTT).
 But it is worthwhile to emphasize that the latent variables introduced in original work of  neural ODEs \cite{chen2018neuralode} represent a continuous-depth hidden layer in a neural network, which is different from the latent variables in the current work.

An important class of problems that exhibit the features of latent dynamics is fluid-structure interactions (FSIs), which arise in many applications in material science and biology \cite{peskin1972flow, peskin2002immersed, kleinstreuer2006biofluid, dowell2001modeling}.  But we also point out that such models are also representative of many other scenarios where continuous interactions exist between the observed quantities and the latent variables, e.g., protein dynamics in solvent \cite{schlick2010molecular}, heat conduction with generalized heat fluxes \cite{chu2019mori}.  
 In the context of fluid-structure interactions, observed data include time series of the structure, e.g., interfaces. Latent variables act as a reduction of the high-dimensional fluid variables  \cite{luo2022projection}. 
 A prescribed force exerted by the structure is coupled to the latent variables. Our models are strongly motivated by these problems. 

In general, there are many methods to estimate parameters in ODE models, see \cite{brunel2008parameter} and the references therein. Some of the algorithms can certainly be used to determine our model. It is worthwhile to point out that many parameter estimation algorithms have been developed for ergodic dynamical systems \cite{harlim2018data,iacus2008simulation,lin2021data,ma2019coarse,berry2020bridging}. The problems considered in this paper are deterministic and transient dynamics. 

The rest of the paper is organized as follows. In Section \ref{sec: model}, we first motivate the form of our model by fluid-structure interaction. Then we show the stability properties of our model, followed by an illustration of the recurrent cell implementation. In Section \ref{sec: examples}, we first compare our method to neural ODEs and discrete model without stability conditions on data from classic benchmark examples in model reduction. We then apply our method to two FSI examples. Our numerical results demonstrate the learned models’ stability and efficiency, as well as their predictive accuracy. Summary and future directions are given in Section \ref{sec: conclusions}.

\section{Stability preserving data-driven reduced model}\label{sec: model}
We use the notation $\lbrace (t_j,\tilde{\bm{x}}_j): \ 1 \leq j \leq N\rbrace$ to denote time series observed quantities $\tilde{\bm{x}}$ at  times instances $\lbrace t_1,\ldots,t_N\rbrace $ that are not necessarily evenly spaced. Here $N$ denotes the length of the time series. Our goal is to construct and learn an ROM including latent variables.
 Throughout the paper, we use lowercase for scalars, lowercase bold for vectors, and capital for matrices. $X^T$ stands for the transpose of a real matrix $X$, similarly for the transpose of a vector. By $\langle \cdot, \cdot \rangle$ and $\Vert \cdot \Vert$ we denote the inner product in Euclidean spaces and the norm induced by this inner product. We write $X\preceq 0$ ($X\succeq 0$) if $X$ is symmetric and negative semi-definite (symmetric positive semi-definite), respectively. 

\subsection{Stable continuous models}\label{sec: model_cts}
 Mimicking the general formulation of the immersed boundary method  with time-dependent stokes flow \cite{peskin2002immersed}, we propose a continuous model with latent variables, 
\begin{equation}\label{eq: fsi_full}\left\{
\begin{aligned}
\dot{\bm{u}}(t) = & W\bm{u}(t) + L \bm{f}(\bm{x}), \\
\dot{\bm{x}}(t) = & L^T\bm{u}(t),  
\end{aligned}\right.
\end{equation}
where $\bm{u}(t)\!: [0,+\infty) \rightarrow \mathbb{R}^p$ represents the latent variables, $\bm{x}(t)\!: [0,+\infty) \rightarrow \mathbb{R}^l$ are observed variables. $W\in\mathbb{R}^{p\times p}$ is a constant matrix. $L\in\mathbb{R}^{p\times l}$ is the fluid-structure coupling matrix. $\bm{f}(\bm{x})\!: \mathbb{R}^l \rightarrow \mathbb{R}^l$ can be viewed as the force exerted on the latent dynamics. The following condition guarantees the stability of a system in the form of \eqref{eq: fsi_full}.

\begin{theorem}\label{thm: fullfsi_stab}
Assume that $\bm{f}$ is a conservative force, i.e.,  $\bm{f}(\bm{x}(t)) = -\nabla_{\bm{x}}E\big(\bm{x}(t)\big)$ for some {positive} energy functional $E(\bm{x})\geq 0, ~ \forall \bm{x}$. The system \eqref{eq: fsi_full} is stable {in the sense of Lyapunov} if the symmetric part of $W$ is negative semi-definite, i.e., $W+W^T\preceq 0$.  
\end{theorem}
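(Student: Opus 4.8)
The plan is to produce an explicit Lyapunov functional for the coupled system \eqref{eq: fsi_full} and verify that it is non-increasing along trajectories. Given the gradient structure $\bm{f}(\bm{x}) = -\nabla_{\bm{x}}E(\bm{x})$, the natural candidate is the sum of a ``kinetic'' term in the latent variables and the potential energy in the observed variables,
\[
V(\bm{u},\bm{x}) = \tfrac12 \Vert \bm{u}\Vert^2 + E(\bm{x}),
\]
which is nonnegative by the hypothesis $E(\bm{x})\geq 0$, and which vanishes precisely when $\bm{u}=\bm{0}$ and $\bm{x}$ is a global minimizer of $E$. The equilibria of \eqref{eq: fsi_full} are exactly the points $(\bm{0},\bm{x}^\ast)$ with $\nabla_{\bm{x}}E(\bm{x}^\ast)=0$, so $V$ (shifted by a constant, or with $E$ normalized so that $E(\bm{x}^\ast)=0$) is a candidate Lyapunov function at such an equilibrium.

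The central computation is to differentiate $V$ along a solution. By the chain rule and \eqref{eq: fsi_full},
\[
\frac{d}{dt}V(\bm{u}(t),\bm{x}(t)) = \langle \bm{u}, W\bm{u} + L\bm{f}(\bm{x})\rangle + \langle \nabla_{\bm{x}}E(\bm{x}),\, L^T\bm{u}\rangle .
\]
Substituting $\bm{f}(\bm{x}) = -\nabla_{\bm{x}}E(\bm{x})$ and moving $L$ across the inner product gives $\langle \bm{u}, L\bm{f}(\bm{x})\rangle = -\langle L^T\bm{u},\nabla_{\bm{x}}E(\bm{x})\rangle$, which cancels the last term exactly; this cancellation is the whole point of the transpose coupling $L$/$L^T$ in the model. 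What survives is the quadratic form $\langle \bm{u}, W\bm{u}\rangle = \tfrac12\langle \bm{u}, (W+W^T)\bm{u}\rangle \leq 0$, where the inequality is precisely the assumption $W+W^T\preceq 0$. Hence $\dot V \le 0$, so $V$ is non-increasing along every trajectory.

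From the monotonicity of $V$ the Lyapunov-stability conclusion follows in the standard way: every trajectory stays inside the sublevel set $\{V \le V(\bm{u}(0),\bm{x}(0))\}$, which gives boundedness and non-escape; and if $E$ has a strict local minimum at $\bm{x}^\ast$ (so that $V$ dominates the squared distance to $(\bm{0},\bm{x}^\ast)$ locally), then for any $\varepsilon>0$ one can choose $\delta>0$ so small that $V(\bm{u}(0),\bm{x}(0))<\inf_{\|(\bm{u},\bm{x})-(\bm{0},\bm{x}^\ast)\|=\varepsilon}V$, forcing the trajectory to remain within $\varepsilon$ of the equilibrium for all $t\ge 0$. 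I expect the genuine content to be this last bookkeeping about what ``stable in the sense of Lyapunov'' means — identifying the equilibrium set and making sure $V$ actually controls the distance to it — rather than the $\dot V$ computation, which is routine once the functional $V$ is written down. (If one additionally assumes that the only trajectory on which $\langle \bm{u},(W+W^T)\bm{u}\rangle\equiv 0$ is the equilibrium, LaSalle's invariance principle upgrades this to asymptotic stability, but that is beyond the stated theorem.)
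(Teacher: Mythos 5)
Your proof is correct and takes essentially the same route as the paper: the identical Lyapunov functional $V(\bm{u},\bm{x})=\tfrac12\Vert\bm{u}\Vert^2+E(\bm{x})$, the same cancellation of the $L$/$L^T$ coupling terms, and the same reduction of $\dot V$ to $\tfrac12\langle\bm{u},(W+W^T)\bm{u}\rangle\le 0$. Your closing paragraph on what ``stable in the sense of Lyapunov'' requires (sublevel sets, local coercivity of $V$ near an equilibrium) is a sensible extra layer of rigor that the paper itself omits, but it does not change the argument.
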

\begin{proof}
Consider the following Lyapunov functional 
\begin{equation}\label{eq: fom_lf}
V(\bm{u}(t), \bm{x}(t)) = \frac{1}{2}\Vert \bm{u}(t)\Vert^2 + E\big(\bm{x}(t)\big).
\end{equation}
$V\geq 0$ is trivial. The following calculation shows $\dot{V}:=\frac{d V}{dt} \leq 0$.
\begin{align*}
\dot{V} = & \frac{1}{2}\langle \dot{\bm{u}}(t), \bm{u}(t) \rangle + \frac{1}{2}\langle \bm{u}(t), \dot{\bm{u}}(t) \rangle +\langle \dot{\bm{x}},\nabla_{\bm{x}}E(\bm{x}(t)) \rangle\\
= & \frac{1}{2}\langle \bm{u}(t), (W+W^T)\bm{u}(t) \rangle+ \langle \bm{u}(t),L\bm{f}(\bm{x}(t))\rangle \\
&+ \langle -\bm{f}(\bm{x}(t)),L^T\bm{u}(t) \rangle \\
= &\frac{1}{2}\langle \bm{u}(t), (W+W^T)\bm{u}(t) \rangle \leq 0.
\end{align*}
\end{proof}

{In practice, it is difficult to enforce the stability $W+W^T\preceq 0$ for \eqref{eq: fsi_full}, especially when the dimension $p$ is large. To circumvent this issue, we consider a ``canonical form'', 
\begin{equation}\label{eq: data_linear}\left\{
\begin{aligned}
\dot{\bm{z}}(t) = & (D+S)\bm{z}(t) + R\bm{f}(\bm{x}(t)), \\
\dot{\bm{x}}(t) = & R^T\bm{z}(t), 
\end{aligned}\right.
\end{equation}
with diagonal $D$ and skew-symmetric $S$, which reduces the stability requirement to much fewer constraints. Remarkably this model has the same representability as  general models of the form  \eqref{eq: fsi_full}, where $\bm{z}$ is the "canonical" latent variable. From now on, we shall denote the latent variable by $\bm{z}$ instead of $\bm{u}$. }

\begin{theorem}\label{thm: equiv_ode}
{Let $\bm{u}(t)$, $\bm{x}(t)$ be a solution of any system in the form of \eqref{eq: fsi_full}. There exists a system in the form of \eqref{eq: data_linear}, with solutions given by $\bm{z}(t)\!:=P^T\bm{u}(t)$, and $\bm{x}(t)$ for some orthogonal matrix $P$.}
\end{theorem}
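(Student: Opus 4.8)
The plan is to exploit the orthogonal freedom in the latent variables. The key observation is that in \eqref{eq: fsi_full} the latent block couples to the observed block only through the matched pair ($L$ in the $\dot{\bm{u}}$ equation and $L^T$ in the $\dot{\bm{x}}$ equation), so an orthogonal change of coordinates $\bm{z} := P^T\bm{u}$ leaves this coupling structure intact while acting on $W$ by conjugation. Concretely, I would first decompose $W = M + A$ with $M := \tfrac12(W+W^T)$ symmetric and $A := \tfrac12(W-W^T)$ skew-symmetric. By the spectral theorem, choose an orthogonal matrix $P$ so that $P^T M P = D$ is diagonal, the diagonal entries being the eigenvalues of $M$.

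Next, set $\bm{z}(t) := P^T\bm{u}(t)$, $R := P^T L$, and $S := P^T A P$. Conjugation by an orthogonal matrix preserves skew-symmetry, so $S^T = P^T A^T P = -P^T A P = -S$; together with $D$ diagonal, this means the proposed system already has exactly the form \eqref{eq: data_linear}. It then remains to verify that $(\bm{z},\bm{x})$ solves it. Differentiating and using $PP^T = I$, $\dot{\bm{z}} = P^T\dot{\bm{u}} = P^T W \bm{u} + P^T L\bm{f}(\bm{x}) = (P^T W P)(P^T\bm{u}) + R\bm{f}(\bm{x}) = (D+S)\bm{z} + R\bm{f}(\bm{x})$, since $P^T W P = P^T M P + P^T A P = D+S$. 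For the observed equation, $\dot{\bm{x}} = L^T\bm{u} = L^T P(P^T\bm{u}) = (P^T L)^T\bm{z} = R^T\bm{z}$. Hence $(\bm{z},\bm{x})$ is a solution of a system of the form \eqref{eq: data_linear} with initial datum $\bm{z}(0) = P^T\bm{u}(0)$, which is the claim.

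In honesty this is a one-line change of variables, so there is no real obstacle; what is worth emphasizing is the structural reason it works, namely that the symmetry $L\leftrightarrow L^T$ between the two equations forces the transformed coupling to be again $R\leftrightarrow R^T$ with the \emph{same} $R = P^T L$, rather than two unrelated matrices. I would also note the trivial converse (any system \eqref{eq: data_linear} is \eqref{eq: fsi_full} with $W = D+S$, $L = R$), so the two model classes have the same representability, and that, since $2D = P^T(W+W^T)P$ is orthogonally similar to $W+W^T$, one has $D\preceq 0$ if and only if $W+W^T\preceq 0$; thus \eqref{eq: data_linear} captures exactly the stable systems of Theorem \ref{thm: fullfsi_stab}, with the semidefiniteness now reduced to sign conditions on the $p$ diagonal entries of $D$.
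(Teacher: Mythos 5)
Your proof is correct and follows essentially the same route as the paper's: split $W$ into its symmetric and skew-symmetric parts, orthogonally diagonalize the symmetric part via $P$, and set $\bm{z}=P^T\bm{u}$, $R=P^TL$, $S=P^T\frac{W-W^T}{2}P$. The extra remarks you add (the trivial converse and the equivalence $D\preceq 0 \Leftrightarrow W+W^T\preceq 0$) are also what the paper records immediately afterward in its corollary and surrounding discussion.
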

\begin{proof}
We decompose the square matrix $W$ to the sum of a symmetric matrix and a skew-symmetric matrix $W =  +(W-W^T)/2$.
Denote the diagonalization of the symmetric part by $(W+W^T)/2=PDP^T,$
where $D$ is diagonal and $P$ is orthogonal. Define $\bm{z}(t)\!:=P^T\bm{u}(t)$ and $R\!:=P^TL$. Then $\bm{z}(t)$ satisfies the following ODE
\begin{equation}
\begin{aligned}
\dot{\bm{z}}(t) = & P^T\dot{\bm{u}}(t) = P^TW\bm{u}(t) + P^TL \bm{f}(\bm{x}(t))  \\
= & P^T\left(\frac{W+W^T}{2}+\frac{W-W^T}{2} \right)P\bm{z}(t) + P^TL \bm{f}(\bm{x}(t))  \\
= & \left(P^T\frac{W+W^T}{2}P+P^T\frac{W-W^T}{2}P \right)\bm{z}(t) + P^TL \bm{f}(\bm{x}(t))  \\
= & (D+S)\bm{z}(t) +  R \bm{f}(\bm{x}(t)),
\end{aligned}
\end{equation}
where $S\!:=P^T\frac{W-W^T}{2}P$ is a skew-symmetric matrix. And it follows that 
\[
\dot{\bm{x}}(t) =  L^T\bm{u}(t) = L^TP\bm{z}(t) = R^T\bm{z}(t).
\]
\end{proof}

Applying theorem \ref{thm: fullfsi_stab} to the canonical form \eqref{eq: data_linear} yields the following stability condition.
\begin{corollary}
The ODE system \eqref{eq: data_linear} is stable if $\bm{f}$ is conservative and $D\preceq 0$.
\end{corollary}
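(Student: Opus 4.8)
The plan is to recognize \eqref{eq: data_linear} as a particular instance of the general system \eqref{eq: fsi_full} and then invoke Theorem \ref{thm: fullfsi_stab} directly. First I would make the identification $W := D + S$ and $L := R$, so that \eqref{eq: data_linear} is literally \eqref{eq: fsi_full} with these coefficient matrices; this is admissible since \eqref{eq: fsi_full} imposes no structural restriction on $W$ or $L$ beyond their dimensions. The force hypothesis carries over unchanged, so the only remaining thing to check is that the symmetric part of $W$ is negative semi-definite.

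Next I would compute that symmetric part. Since $D$ is diagonal it is symmetric, $D^T = D$, and since $S$ is skew-symmetric, $S^T = -S$. Hence $W + W^T = (D+S) + (D+S)^T = 2D$, so the condition $W + W^T \preceq 0$ is equivalent to $D \preceq 0$, which is exactly the hypothesis of the corollary. With both hypotheses of Theorem \ref{thm: fullfsi_stab} now in force, its conclusion yields Lyapunov stability of \eqref{eq: data_linear}.

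For completeness I would also note that the Lyapunov functional is inherited verbatim: taking $V(\bm{z},\bm{x}) = \frac{1}{2}\Vert \bm{z}\Vert^2 + E(\bm{x})$, the cross terms $\langle \bm{z}, R\bm{f}(\bm{x})\rangle$ and $\langle -\bm{f}(\bm{x}), R^T\bm{z}\rangle$ cancel exactly as in the proof of Theorem \ref{thm: fullfsi_stab}, leaving $\dot V = \frac{1}{2}\langle \bm{z}, (W+W^T)\bm{z}\rangle = \langle \bm{z}, D\bm{z}\rangle \leq 0$.

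I do not expect any genuine obstacle: the corollary is phrased as a corollary precisely because the canonical form in \eqref{eq: data_linear} was engineered so that its symmetric part is the diagonal block $D$, via the orthogonal diagonalization used in Theorem \ref{thm: equiv_ode}. The only points that warrant a sentence of care are making explicit that ``stable'' here means Lyapunov stability in the same sense as in Theorem \ref{thm: fullfsi_stab}, and spelling out the matrix identification $W=D+S$, $L=R$ so that the application of the earlier theorem is unambiguous.
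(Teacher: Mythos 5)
Your proposal is correct and matches the paper's intent exactly: the paper gives no separate proof, stating only that the corollary follows by ``applying Theorem \ref{thm: fullfsi_stab} to the canonical form \eqref{eq: data_linear},'' which is precisely your identification $W=D+S$, $L=R$ with $W+W^T=2D$. Your extra verification of the Lyapunov functional is a harmless elaboration of the same argument.
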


Therefore, a system of \eqref{eq: data_linear} with constraint $D\preceq 0$ is equivalent to \eqref{eq: fsi_full} with constraint $W+W^T\preceq 0$. Next, we introduce the discrete model based on \eqref{eq: data_linear}.

\subsection{Stable discrete models}\label{sec: model_dis}

We now turn to the discrete model based on a discretization of \eqref{eq: data_linear}. {Since the stability properties are our primary focus, a straightforward discretization, e.g., the Euler's method and general explicit Runge-Kutta methods,  may not be a good choice because they do not automatically} inherit the stability of \eqref{eq: data_linear} when $D\preceq 0$. On the other hand, the implicit Euler's method is stable but may be more diffusive than the continuous system, limiting its capability to capturing certain properties in the data, e.g., periodicity. Therefore, the discrete model we propose is derived from the implicit mid-point scheme. The next theorem shows the implicit mid-point scheme inherits the Lyapunov stability \cite{deuflhard2002scientific}.
\begin{theorem}\label{thm: mid_quad}
Suppose that the solution of the autonomous ODE
\begin{equation}\label{eq: ode_quad}
    \dot{\bm{x}}=\bm{h}(\bm{x}),
\end{equation}
has stable solutions  with respect to a quadratic Lyapunov functional
\begin{equation}\label{eq: ode_lyap}
V(\bm{x}) = \frac{1}{2}\Bigl< \bm{x}, G\bm{x}\Bigr>,
\end{equation}
where $G \succeq 0$. Namely, $V(\bm{x})\geq 0$ and 
\begin{equation}\label{eq: ode_lv}
    \dot{V}\!:= \Bigl< \bm{h},  \nabla V(\bm{x}) \Bigr> \leq 0.
\end{equation}
  Then the implicit mid-point discretization
\begin{equation}\label{eq: ode_imp}
    \frac{\bm{x}_{i+1}-\bm{x}_i}{\Delta t} = \bm{h}\left(\frac{\bm{x}_i+\bm{x}_{i+1}}{2} \right), \; i\geq 0, 
\end{equation}
is unconditionally stable with respect to the same Lyapunov functional \eqref{eq: ode_lyap}.
\end{theorem}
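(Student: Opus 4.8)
The plan is to show directly that the quadratic functional $V$ is non-increasing along the sequence $\{\bm{x}_i\}$ generated by \eqref{eq: ode_imp}, i.e. $V(\bm{x}_{i+1})\le V(\bm{x}_i)$ for every $i\ge 0$, with no restriction on $\Delta t$. Together with $V(\bm{x})\ge 0$, which is immediate from $G\succeq 0$, this monotonicity is exactly the statement that the discrete flow is unconditionally stable with respect to \eqref{eq: ode_lyap}. So the whole theorem reduces to establishing that one discrete dissipation inequality.

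The key algebraic observation is the telescoping identity for quadratic forms: since $G$ is symmetric,
\[
V(\bm{a}) - V(\bm{b}) = \frac{1}{2}\langle \bm{a}, G\bm{a}\rangle - \frac{1}{2}\langle \bm{b}, G\bm{b}\rangle = \frac{1}{2}\bigl\langle \bm{a}-\bm{b},\, G(\bm{a}+\bm{b})\bigr\rangle,
\]
the cross terms cancelling because $\langle \bm{a}, G\bm{b}\rangle = \langle \bm{b}, G\bm{a}\rangle$. I would apply this with $\bm{a}=\bm{x}_{i+1}$, $\bm{b}=\bm{x}_i$, and introduce the midpoint state $\bm{x}_{i+1/2}:=\frac{1}{2}(\bm{x}_i+\bm{x}_{i+1})$, so that $\bm{x}_{i+1}+\bm{x}_i = 2\bm{x}_{i+1/2}$ while the scheme \eqref{eq: ode_imp} gives $\bm{x}_{i+1}-\bm{x}_i = \Delta t\,\bm{h}(\bm{x}_{i+1/2})$. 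Substituting both into the identity yields
\[
V(\bm{x}_{i+1}) - V(\bm{x}_i) = \Delta t\,\bigl\langle \bm{h}(\bm{x}_{i+1/2}),\, G\bm{x}_{i+1/2}\bigr\rangle = \Delta t\,\bigl\langle \bm{h}(\bm{x}_{i+1/2}),\, \nabla V(\bm{x}_{i+1/2})\bigr\rangle,
\]
using $\nabla V(\bm{x}) = G\bm{x}$, which again relies on $G=G^T$. The right-hand side is precisely $\Delta t\,\dot V$ evaluated at the state $\bm{x}_{i+1/2}$, and it is $\le 0$ by the hypothesis \eqref{eq: ode_lv}, which is assumed to hold pointwise in state space and may therefore be invoked at the solution-dependent point $\bm{x}_{i+1/2}$. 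Hence $V(\bm{x}_{i+1})\le V(\bm{x}_i)$ regardless of $\Delta t$, completing the argument.

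There is essentially no analytic obstacle: the proof is a one-line computation once the quadratic telescoping identity and the midpoint substitution $\bm{x}_{i+1}-\bm{x}_i = \Delta t\,\bm{h}(\bm{x}_{i+1/2})$, $\bm{x}_{i+1}+\bm{x}_i = 2\bm{x}_{i+1/2}$ are combined — the non-linearity of $\bm{h}$ never causes trouble because everything is evaluated at the single point $\bm{x}_{i+1/2}$. The only subtleties worth flagging explicitly are that both $\nabla V(\bm{x})=G\bm{x}$ and the cancellation of cross terms use $G=G^T$, and that \eqref{eq: ode_lv} must be a global (not merely local) inequality so it applies at the iterate-dependent midpoint. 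For completeness one could also note that the implicit update \eqref{eq: ode_imp} is tacitly assumed to be solvable for $\bm{x}_{i+1}$; this is a matter of well-posedness of the scheme and is independent of the stability claim itself.
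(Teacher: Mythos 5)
Your proof is correct and is essentially identical to the paper's: both exploit the quadratic telescoping identity $V(\bm{x}_{i+1})-V(\bm{x}_i)=\frac12\langle \bm{x}_{i+1}+\bm{x}_i, G(\bm{x}_{i+1}-\bm{x}_i)\rangle$ together with the midpoint substitution $\bm{x}_{i+1}-\bm{x}_i=\Delta t\,\bm{h}(\bm{x}_{i+1/2})$ to reduce the discrete dissipation to the continuous hypothesis \eqref{eq: ode_lv} evaluated at $\bm{x}_{i+1/2}$. Your added remarks on the symmetry of $G$, the pointwise nature of \eqref{eq: ode_lv}, and the solvability of the implicit update are sensible but not part of the paper's (terser) argument.
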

\begin{proof}

Denote $\bm{x}_{i+1/2}:=\frac{\bm{x}_i+\bm{x}_{i+1}}{2}$, direct calculation shows $V(\bm{x}_{i+1}) \leq V(\bm{x}_i)$:
\begin{align*}
    V(\bm{x}_{i+1}) - V(\bm{x}_i)= & 
     \frac12 \Bigl< \bm{x}_{i+1}+\bm{x}_i,  G(\bm{x}_{i+1}-\bm{x}_i) \Bigr> \\ 
    = & \Delta t \Bigl< \bm{x}_{i+1/2}, G\bm{h}(\bm{x}_{i+1/2}) \Bigr> \leq  0.  
\end{align*}
\end{proof}

{The method \eqref{eq: ode_imp} is an implicit ODE method, which can be recast into the standard form \cite{deuflhard2002scientific},
\[ \bm{k} = \bm{h}\left(\bm{x}_i + \frac{\Delta t}2  \bm{k}\right), \quad  
\bm{x}_{i+1} = \bm{x}_i + \Delta t \bm{k}.\] 
Such a stability-preserving descretizations can be generalized to higher order using a collocation approach \cite{deuflhard2002scientific}.
}

We now return to the model \eqref{eq: data_linear}. By applying the 
 implicit mid-point discretization \eqref{eq: ode_imp}, one arrives at,
 \begin{equation}\label{eq: linear_mid_point}
\begin{aligned}
\bm{z}_{i+1} = & \bm{z}_i+\frac{\Delta t}{2}(D+S)(\bm{z}_i+\bm{z}_{i+1}) + \Delta tR\bm{f}\left(\frac{\bm{x}_{i+1}+\bm{x}_i}{2}\right), \\
\bm{x}_{i+1} = & \bm{x}_i + \frac{\Delta t}{2}R^T(\bm{z}_i+\bm{z}_{i+1}).
\end{aligned}
\end{equation}

 {When applying a gradient-based algorithm, e.g., the ADAM method \cite{kingma2014adam}, to learn model parameters of \eqref{eq: linear_mid_point} from data $\lbrace (t_j,\tilde{\bm{x}}_j): \ 1 \leq j \leq N\rbrace$, derivatives of $\bm{x}_i$ with respect to model parameters must be calculated for all $i$. Note that all $\bm{x}_i$'s in \eqref{eq: linear_mid_point} are model outputs and their distance to data $\tilde{\bm{x}}_j$'s will be minimized. For general $\bm{f}$,  such differentiation can be carried out by implicit differentiation, which involves solving a nonlinear equations for every $i$ during back propagation. 
 To simplify the implementation, we consider the following alternative of \eqref{eq: linear_mid_point} by linearizing $\bm{f}\left(\frac{\bm{x}_{i+1}+\bm{x}_i}{2}\right)$ about $\bm{x}_i$: }
 \begin{equation}\label{eq: linear_mid_discrete}
\begin{aligned}
\bm{z}_{i+1} = & \bm{z}_i+\frac{1}{2}(D_d+S_d)(\bm{z}_i+\bm{z}_{i+1}) +\frac{R_d}{2}J_f(\bm{x}_i)(\bm{x}_{i+1}-\bm{x}_i) \\
& +R_d\bm{f}(\bm{x}_i),  \\
\bm{x}_{i+1} = & \bm{x}_i + \frac{R_d^T}{2}(\bm{z}_i+\bm{z}_{i+1}), 
\end{aligned}
\end{equation}
where $J_f(\bm{x}_i)\!:=\nabla \bm{f}(\bm{x}_i)$ is the Jacobian of $\bm{f}$ (Hessian of $E$). The matrices $D_d$, $S_d$ and $R_d$ constitute the parameters that will be learned from data, with stability constraints $D_d\preceq 0$ and $S_d=-S_d^T$.

The loss of accuracy caused by the linearization may be mitigated by choosing small $\Delta t$. More importantly, we shall demonstrate that  the stability properties are preserved by such linearization.
Toward this end, we first examine the case when $\bm{f}$ is a linear function. Then \eqref{eq: linear_mid_discrete} is equivalent to a model in the form of \eqref{eq: linear_mid_point}. Thus, its stability property is not affected by the linearization.
\begin{theorem}\label{thm: gas_lf}
Model \eqref{eq: linear_mid_discrete} is globally stable if the $\bm{f}(\bm{x})$ is a conservative force of a quadratic energy $E(\bm{x})$, i.e., $\bm{f}(\bm{x})=-\nabla E(\bm{x})=-T\bm{x}$, where $T\succeq 0$. 
\end{theorem}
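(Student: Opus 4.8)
The plan is to reduce the claim to the two stability results already in hand --- the corollary following Theorem~\ref{thm: equiv_ode} (Lyapunov stability of the canonical continuous form) and Theorem~\ref{thm: mid_quad} (the implicit mid-point scheme preserves quadratic Lyapunov functionals) --- by observing that the linearization built into \eqref{eq: linear_mid_discrete} is \emph{exact} when $\bm{f}$ is linear. Indeed, with $\bm{f}(\bm{x})=-T\bm{x}$ the Jacobian $J_f(\bm{x}_i)=-T$ is constant, so
\[
\tfrac{1}{2}R_d J_f(\bm{x}_i)(\bm{x}_{i+1}-\bm{x}_i) + R_d\bm{f}(\bm{x}_i)
= -\tfrac{1}{2}R_d T(\bm{x}_{i+1}-\bm{x}_i) - R_d T\bm{x}_i
= -\tfrac{1}{2}R_d T(\bm{x}_i+\bm{x}_{i+1})
= R_d\,\bm{f}\!\left(\tfrac{\bm{x}_i+\bm{x}_{i+1}}{2}\right).
\]
Hence in the quadratic-energy case \eqref{eq: linear_mid_discrete} is \emph{literally} the implicit mid-point scheme \eqref{eq: linear_mid_point} applied to the linear canonical system \eqref{eq: data_linear} with coefficient matrices $D_d,S_d,R_d$ (the time step having been absorbed into these matrices, i.e.\ effectively $\Delta t=1$).

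Next I would set $\bm{y}=(\bm{z},\bm{x})$ and note that, in this linear regime, \eqref{eq: linear_mid_discrete} reads $\bm{y}_{i+1}-\bm{y}_i=\bm{h}\big((\bm{y}_i+\bm{y}_{i+1})/2\big)$ for the autonomous vector field $\bm{h}(\bm{z},\bm{x})=\big((D_d+S_d)\bm{z}+R_d\bm{f}(\bm{x}),\,R_d^{T}\bm{z}\big)$, that is, exactly the form \eqref{eq: ode_imp}. Since $T\succeq 0$, the energy $E(\bm{x})=\tfrac12\langle\bm{x},T\bm{x}\rangle$ is nonnegative and $\bm{f}=-\nabla E$ is conservative; with $D=D_d\preceq 0$ the corollary yields Lyapunov stability of the continuous system \eqref{eq: data_linear}, and the computation in the proof of Theorem~\ref{thm: fullfsi_stab} supplies the explicit functional $V(\bm{z},\bm{x})=\tfrac12\|\bm{z}\|^2+\tfrac12\langle\bm{x},T\bm{x}\rangle$ with $\dot V=\langle\bm{z},D_d\bm{z}\rangle\le 0$.

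The one feature that makes the argument close is that this $V$ is \emph{quadratic}: $V(\bm{y})=\tfrac12\langle\bm{y},G\bm{y}\rangle$ where $G$ is the block-diagonal matrix with diagonal blocks $I_p$ and $T$, so $G\succeq 0$. Theorem~\ref{thm: mid_quad} then applies to the pair $(\bm{h},V)$ and gives $V(\bm{y}_{i+1})\le V(\bm{y}_i)$ for every index $i$ and every initial state, which is the asserted global (unconditional) stability of \eqref{eq: linear_mid_discrete}; when $T\succ 0$ the sublevel sets of $V$ are bounded, so all discrete trajectories remain bounded. I do not expect a genuine obstacle here --- the substance of the theorem is precisely the opening identity, showing that the \emph{ad hoc} linearization does not break the mid-point structure when $\bm{f}$ is linear --- and the only thing I would be careful to verify is the bookkeeping of where $\Delta t$ has been absorbed, so that Theorem~\ref{thm: mid_quad} is being invoked with a legitimately positive step size.
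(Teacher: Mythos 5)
Your proposal is correct and follows the same route as the paper: reduce to Theorem~\ref{thm: mid_quad} by observing that the linearization in \eqref{eq: linear_mid_discrete} is exact when $\bm{f}$ is linear, so the scheme is genuinely the implicit mid-point discretization of the stable canonical system with the quadratic Lyapunov functional $V(\bm{z},\bm{x})=\tfrac12\|\bm{z}\|^2+\tfrac12\langle\bm{x},T\bm{x}\rangle$. The only difference is that you write out explicitly the identity $\tfrac12 R_d J_f(\bm{x}_{i+1}-\bm{x}_i)+R_d\bm{f}(\bm{x}_i)=R_d\bm{f}\bigl(\tfrac{\bm{x}_i+\bm{x}_{i+1}}{2}\bigr)$, which the paper only asserts in the surrounding text.
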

\begin{proof}
Model \eqref{eq: linear_mid_discrete} is the implicit mid-point discretization of a stable system with Lyapunov functional
\begin{equation}
V(\bm{z},\bm{x})=\frac{1}{2}\Vert \bm{z}\Vert^2+E(\bm{x}).
\end{equation}
$\bm{f}(\bm{x})=-\nabla E(\bm{x})=-T\bm{x}$ implies $E(\bm{x})=\frac{1}{2}\bm{x}^TT\bm{x}$ being quadratic. Thus, $V(\bm{z},\bm{x})$ is quadratic. Then by Theorem \ref{thm: mid_quad}, model \eqref{eq: linear_mid_discrete} is globally stable.
\end{proof}

If $\bm{f}$ is nonlinear, {one can study the linear stability around a mechanical equilibrium, i.e., $\bm{x}=\bm{x}_e$ with $\bm{f}(\bm{x}_e)=0$. A stability equilibrium corresponds to a positive semi-definite Hessian $J_f = \mathrm{Hess}(E)$.  We can show that  }
\eqref{eq: linear_mid_discrete} still inherits the linear stability, as stated by the following theorem.

\begin{theorem}\label{thm: ls_lf}
Model \eqref{eq: linear_mid_discrete} is locally stable about $\bm{x}=\bm{x}_e$ and $\bm{z}=0$ if $\bm{f}(\bm{x}_e)=0$ and $J_f(\bm{x}_e) \preceq 0$.
\end{theorem}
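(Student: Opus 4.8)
The plan is to linearize the one-step map defined by \eqref{eq: linear_mid_discrete} about the proposed equilibrium and then reduce the statement to the already-established global stability of the linearized model, Theorem~\ref{thm: gas_lf}. First I would check that $(\bm{z},\bm{x})=(\bm{0},\bm{x}_e)$ is indeed a fixed point of the discrete model: substituting $\bm{z}_i=\bm{z}_{i+1}=\bm{0}$ and $\bm{x}_i=\bm{x}_{i+1}=\bm{x}_e$ into \eqref{eq: linear_mid_discrete} and using $\bm{f}(\bm{x}_e)=\bm{0}$, both equations are satisfied.

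Next I would linearize. Writing $\bm{z}_i=\bm{\zeta}_i$ and $\bm{x}_i=\bm{x}_e+\bm{\xi}_i$ with $\bm{\zeta}_i,\bm{\xi}_i$ small, two Taylor expansions are needed: $\bm{f}(\bm{x}_i)=J_f(\bm{x}_e)\bm{\xi}_i+O(\Vert\bm{\xi}_i\Vert^2)$, using $\bm{f}(\bm{x}_e)=\bm{0}$; and, since $\bm{x}_{i+1}-\bm{x}_i=\bm{\xi}_{i+1}-\bm{\xi}_i$ is already first order, $J_f(\bm{x}_i)(\bm{x}_{i+1}-\bm{x}_i)=J_f(\bm{x}_e)(\bm{\xi}_{i+1}-\bm{\xi}_i)+O(\Vert\bm{\xi}\Vert^2)$. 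Retaining first-order terms yields the linear recursion
\begin{align*}
\bm{\zeta}_{i+1} &= \bm{\zeta}_i+\tfrac12(D_d+S_d)(\bm{\zeta}_i+\bm{\zeta}_{i+1})+\tfrac{R_d}{2}J_f(\bm{x}_e)(\bm{\xi}_{i+1}-\bm{\xi}_i)+R_dJ_f(\bm{x}_e)\bm{\xi}_i,\\
\bm{\xi}_{i+1} &= \bm{\xi}_i+\tfrac{R_d^T}{2}(\bm{\zeta}_i+\bm{\zeta}_{i+1}).
\end{align*}

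The key point is that this is \emph{exactly} model \eqref{eq: linear_mid_discrete} for the linear force $\bm{g}(\bm{\xi}):=J_f(\bm{x}_e)\bm{\xi}=-T\bm{\xi}$ with $T:=-J_f(\bm{x}_e)$, since then $J_g\equiv -T=J_f(\bm{x}_e)$ and $\bm{g}(\bm{\xi}_i)=J_f(\bm{x}_e)\bm{\xi}_i$. By the hypothesis $J_f(\bm{x}_e)\preceq0$ we have $T\succeq0$, so $\bm{g}$ is the conservative force of the quadratic energy $E(\bm{\xi})=\tfrac12\bm{\xi}^TT\bm{\xi}\ge0$. Theorem~\ref{thm: gas_lf} then applies to the $(\bm{\zeta},\bm{\xi})$ dynamics and shows it is globally stable with the non-increasing Lyapunov functional $V(\bm{\zeta},\bm{\xi})=\tfrac12\Vert\bm{\zeta}\Vert^2+\tfrac12\bm{\xi}^TT\bm{\xi}$; translated back to $(\bm{z},\bm{x})$ this is precisely the asserted local (linear) stability about $(\bm{0},\bm{x}_e)$.

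The step I expect to require the most care is the linearization of the term $\tfrac{R_d}{2}J_f(\bm{x}_i)(\bm{x}_{i+1}-\bm{x}_i)$: one must argue that replacing the argument $\bm{x}_i$ of $J_f$ by $\bm{x}_e$ perturbs this term only at second order, so that the linearized map coincides with \eqref{eq: linear_mid_discrete} for a genuinely linear force and Theorem~\ref{thm: gas_lf} can be invoked verbatim. A secondary point worth making explicit is the precise meaning of ``locally stable'' here, namely stability of the linearization in the sense of Lyapunov, since --- as already at the continuous level in Theorem~\ref{thm: fullfsi_stab} --- Theorem~\ref{thm: gas_lf} furnishes a positive semidefinite rather than strictly convex Lyapunov functional.
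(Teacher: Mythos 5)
Your proof is correct and follows essentially the same route as the paper's: both reduce the claim to the stability of the implicit mid-point discretization of the linearized system, which carries a quadratic Lyapunov functional. The only difference is that you linearize the discrete one-step map directly and then invoke Theorem~\ref{thm: gas_lf}, whereas the paper linearizes the continuous system \eqref{eq: data_linear} first and appeals to Theorem~\ref{thm: mid_quad}; your version makes explicit the fact, left implicit in the paper, that linearization commutes with the mid-point discretization so that the linearized discrete map really is \eqref{eq: linear_mid_discrete} with the linear force $J_f(\bm{x}_e)\bm{\xi}$.
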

\begin{proof}
Consider the linearization of system \eqref{eq: data_linear} about $(\bm{z},\bm{x})=(\bm{0}, \bm{x}_e)$. If $\bm{f}(\bm{x}_e)=0$ and $J_f(\bm{x}_e) \preceq 0$, the linearized system is stable with a quadratic Lyapunov functional. Therefore, model \eqref{eq: linear_mid_discrete} is locally stable as it is the implicit mid-point discretization of the linearized system.
\end{proof}

By writing \eqref{eq: linear_mid_discrete} in a matrix form
\begin{equation}\label{eq: linear_mid_discrete_lf}
\begin{aligned}
&\begin{bmatrix}
I-\frac{1}{2}(D_d+S_d) & -\frac{1}{2}R_dJ_f(\bm{x}_i)\\-\frac{1}{2}R_d^T & I
\end{bmatrix} 
\begin{bmatrix}
\bm{z}_{i+1}\\ \bm{x}_{i+1}
\end{bmatrix}  \\
 =
&\begin{bmatrix}
I+\frac{1}{2}(D_d+S_d) & -\frac{1}{2}R_dJ_f(\bm{x}_i)\\\frac{1}{2}R_d^T & I
\end{bmatrix}
\begin{bmatrix}
\bm{z}_i\\ \bm{x}_i
\end{bmatrix}
+\begin{bmatrix}
R_d\bm{f}(\bm{x}_i)\\ 0
\end{bmatrix},
\end{aligned}
\end{equation}
one can see that the discrete model requires taking the inverse of a non-constant matrix depending on $\bm{x}_i$. Next, we establish the convexity condition  for such inverse to exist. 

\begin{theorem}\label{thm: inv_discrete}
The matrix on the left side of \eqref{eq: linear_mid_discrete_lf} is invertible {if and only if}
\begin{equation}\label{eq: invM}
M\!:= I-\frac{1}{2}(D_d+S_d)-\frac{1}{4}R_dJ_f(\bm{x}_i)R_d^T
\end{equation}
is invertible. {In particular, $M$ is invertible when $D_d\preceq 0$, $S_d=-S_d^T$, and $\langle\bm{u}, J_f(\bm{x}_i)\bm{u}\rangle \leq 0$ for any vector $\bm{u}$.}
\end{theorem}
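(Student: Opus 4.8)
The plan is to reduce invertibility of the full $(p+l)\times(p+l)$ coefficient matrix appearing on the left of \eqref{eq: linear_mid_discrete_lf} to invertibility of the smaller matrix $M$ via a Schur-complement (block-LU) factorization, and then to obtain the sufficient condition by showing that the symmetric part of $M$ is positive definite, which already forces $M$ to be nonsingular.

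First I would write the left-hand side matrix of \eqref{eq: linear_mid_discrete_lf} as a $2\times 2$ block matrix
\[
\mathcal{A} = \begin{bmatrix} A & B \\ C & I \end{bmatrix},
\qquad A := I - \tfrac12(D_d+S_d), \quad B := -\tfrac12 R_d J_f(\bm{x}_i), \quad C := -\tfrac12 R_d^T .
\]
Since the $(2,2)$-block is exactly the identity, it is invertible, and the elementary block-LU factorization
\[
\mathcal{A} = \begin{bmatrix} I & B \\ 0 & I \end{bmatrix}
\begin{bmatrix} M & 0 \\ 0 & I \end{bmatrix}
\begin{bmatrix} I & 0 \\ C & I \end{bmatrix},
\qquad M = A - BC = I - \tfrac12(D_d+S_d) - \tfrac14 R_d J_f(\bm{x}_i) R_d^T ,
\]
holds. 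The two outer factors are unit block-triangular, hence invertible (with determinant $1$), so $\mathcal{A}$ is invertible if and only if the middle factor is, i.e.\ if and only if $M$ is invertible. This establishes the first assertion.

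For the sufficient condition I would pass to the symmetric part of $M$. Because $S_d$ is skew-symmetric it contributes nothing to $M + M^T$, and $J_f(\bm{x}_i)$ is symmetric (it is a Hessian of $E$), so
\[
\tfrac12\bigl(M + M^T\bigr) = I - \tfrac12 D_d - \tfrac14 R_d J_f(\bm{x}_i) R_d^T .
\]
Under the hypotheses, $-\tfrac12 D_d \succeq 0$, and for every $\bm{v}$ one has $\langle \bm{v}, R_d J_f(\bm{x}_i) R_d^T \bm{v}\rangle = \langle R_d^T\bm{v},\, J_f(\bm{x}_i)(R_d^T\bm{v})\rangle \le 0$, hence $-\tfrac14 R_d J_f(\bm{x}_i) R_d^T \succeq 0$; therefore $\tfrac12(M+M^T) \succeq I$ and in particular it is positive definite. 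Finally I would invoke the standard fact that a real square matrix with positive-definite symmetric part is nonsingular: if $M\bm{v} = \bm{0}$ then $0 = \langle \bm{v}, M\bm{v}\rangle = \langle \bm{v}, \tfrac12(M+M^T)\bm{v}\rangle \ge \|\bm{v}\|^2$, forcing $\bm{v} = \bm{0}$. Hence $M$, and with it $\mathcal{A}$, is invertible.

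There is no genuine obstacle here; the two points that need a little care are that $M$ itself is not symmetric, so one must form $M + M^T$ before invoking definiteness, and that it is precisely the exact identity in the $(2,2)$-block of $\mathcal{A}$ that makes the Schur complement this transparent. It is worth remarking that the hypothesis $\langle \bm{u}, J_f(\bm{x}_i)\bm{u}\rangle \le 0$ for all $\bm{u}$ is nothing but the pointwise linear-stability condition $J_f \preceq 0$ that appears in Theorem \ref{thm: ls_lf}, so the invertibility required to advance the recurrence \eqref{eq: linear_mid_discrete} is automatically available whenever the model is run in its stable regime.
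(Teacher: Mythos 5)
Your proof is correct and follows essentially the same route as the paper: the equivalence is the standard Schur-complement reduction (the paper phrases it as an explicit block inverse in one direction and an explicit kernel vector $\bigl[\tilde{\bm{z}};\tfrac12 R_d^T\tilde{\bm{z}}\bigr]$ in the other, which is the same computation your block-LU factorization packages more cleanly), and your sufficiency argument via positive definiteness of $\tfrac12(M+M^T)$ is exactly the paper's estimate $\langle\bm{u},M\bm{u}\rangle=\langle\bm{u},H\bm{u}\rangle-\langle\bm{u},K\bm{u}\rangle>0$. No gaps.
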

\begin{proof}
Assuming $M$ is invertible, the formula for $2\times 2$ block matrices gives the inverse of the matrix on the left side of \eqref{eq: linear_mid_discrete_lf}
\begin{equation}
\begin{bmatrix}
M^{-1} & \frac{1}{2}M^{-1}R_dJ_f(\bm{x}_i) \\\frac{1}{2}R_d^TM^{-1} & I+\frac{1}{4}R_d^TM^{-1}R_dJ_f(\bm{x}_i)
\end{bmatrix}.
\end{equation}
Conversely, if $M$ is singular, there exists a nonzero vector $\tilde{\bm{z}}$ such that $M\tilde{\bm{z}}=\bm{0}$. Then the matrix on the left side of \eqref{eq: linear_mid_discrete_lf} is singular because
\[
\begin{bmatrix}
I-\frac{1}{2}(D_d+S_d) & -\frac{1}{2}R_dJ_f(\bm{x}_i)\\-\frac{1}{2}R_d^T & I
\end{bmatrix} 
\begin{bmatrix}
\tilde{\bm{z}}\\
\frac12 R_d^T\tilde{\bm{z}}
\end{bmatrix} = \bm{0},
\]
where the vector in the above equation is nonzero.

For $M$ to be invertible, it suffices to show that for any nonzero vector $\bm{u}$, $\langle\bm{u}, M\bm{u}\rangle > 0$. We decompose $M$ into $M=H-K$ with 
\[
H = I-\frac{1}{2}(D_d+S_d), \quad K = \frac{1}{4}R_dJ_f(\bm{x}_i)R_d^T.
\]
For any nonzero vector $\bm{u}$, we have
\begin{equation}\label{eq: invH}
\begin{aligned}
\langle\bm{u}, H\bm{u}\rangle = & \bm{u}^T(I-\frac{1}{2}(D_d+S_d))\bm{u} \\
= & \Vert \bm{u}\Vert^2 -\frac{1}{2}\bm{u}^TD_d\bm{u} - \frac{1}{2}\bm{u}^TS_d\bm{u} \\
= & \Vert \bm{u}\Vert^2 -\frac{1}{2}\bm{u}^TD_d\bm{u} \geq  \Vert \bm{u}\Vert^2 > 0.
\end{aligned}
\end{equation}
It follows that for any vector $\bm{u}$, $\langle\bm{u}, M\bm{u}\rangle = \langle\bm{u}, (H-K)\bm{u}\rangle >0$ since $\langle\bm{u}, H\bm{u}\rangle > 0$ (from \eqref{eq: invH}) and $\langle\bm{u},K\bm{u}\rangle \leq 0$ (due to the assumption $\langle\bm{u}, J_f(\bm{x}_i)\bm{u}\rangle \leq 0$ for any vector $\bm{u}$).
Therefore, $M$ is invertible.
\end{proof}

With the inverse given in theorem \ref{thm: inv_discrete}, the discrete model \eqref{eq: linear_mid_discrete_lf} is well-defined and could be written as
\begin{equation}\label{eq: linear_discrete_invd}
\begin{bmatrix}
\bm{z}_{i+1}\\ \bm{x}_{i+1}
\end{bmatrix}
 =
\begin{bmatrix}
2M^{-1}-I & 0\\R_d^TM^{-1} & I
\end{bmatrix}
\begin{bmatrix}
\bm{z}_i\\ \bm{x}_i
\end{bmatrix}
+\begin{bmatrix}
I\\ \frac{1}{2}R_d^T
\end{bmatrix}M^{-1}R_d\bm{f}(\bm{x}_i).
\end{equation}

\subsection{ Numerical implementation using recurrent cells}\label{sec: rnn}
We implement model \eqref{eq: linear_mid_discrete} as a customized recurrent cell. The dimension $p$ of the latent state $\bm{z}\in\mathbb{R}^p$ is a hyper parameter to be tuned. The function $\bm{f}$ and its Jacobian $J_f$ must also be given. The recurrent cell contains trainable parameters 
\begin{enumerate}[(1)]
\setlength\itemsep{-0.2em}
\item a vector in $\mathbb{R}^p$ of non positive values that encode the diagonal entries of $D_d$;
\item a dense matrix $C_d\in \mathbb{R}^{p\times p}$ that generates the skew-symmetric matrix $S_d=C_d-C_d^T$;
\item a dense matrix $R_d \in \mathbb{R}^{p\times l}$ that facilitates the communication with the latent variables.
\end{enumerate}
The recurrent cell is state-to-sequence in the sense that it takes the initial value $\bm{x}_0$ as input and outputs a sequence $\bm{x}_i\in \mathbb{R}^l$, $i=1,2,...,n$ of length $n$ (see Fig.~\ref{fig: rnn_cell} for a schematic of the recurrent architecture). 
\begin{figure}
    \centering
    \includegraphics[width=\columnwidth]{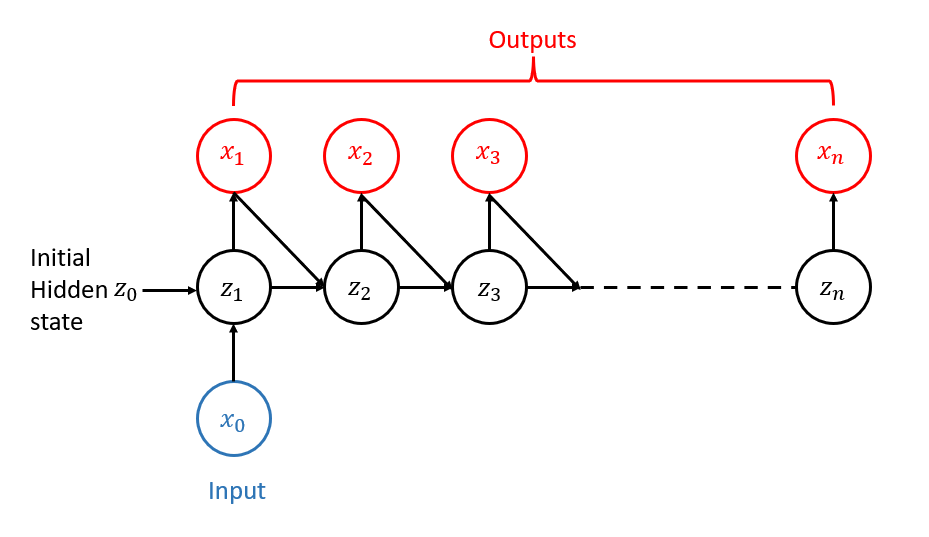}
    \caption{A state-to-sequence recurrent cell.}
    \label{fig: rnn_cell}
\end{figure} 

Note that \eqref{eq: linear_discrete_invd} is equivalent to
\begin{subequations}\label{eq: state_up}
\begin{alignat}{2}
      \bm{z}_{i+1} = & M^{-1}(2\bm{z}_i+R_d\bm{f}(\bm{x}_i))-\bm{z}_i  \label{eq: hid_up} \\
      \bm{x}_{i+1} = & \bm{x}_i+\frac{R_d^T}{2}(\bm{z}_i+\bm{z}_{i+1}). \label{eq: out_up}
    \end{alignat}
\end{subequations}
{During the forward propagation, \eqref{eq: state_up} is used instead of \eqref{eq: linear_discrete_invd} to simplify the update of $\bm{x}$. Specifically, it is done in three steps:}
\begin{itemize}
\setlength\itemsep{-0.2em}
    \item first, we calculate the inverse of $M$;
\item secondly, the latent state $\bm{z}_{i+1}$ is updated by \eqref{eq: hid_up};
\item finally, the output $\bm{x}_{i+1}$ of the next time step is calculated as \eqref{eq: out_up}.
\end{itemize}
Parameters of the recurrent cell, $(D_d, \ C_d, \ R_d)$, are trained by solving the following constrained optimization problem with the  ADAM optimizer. 
\begin{equation}\label{eq: loss_fcn}
\begin{array}{lc}
\displaystyle \min_{D_d,C_d,R_d} & \frac{1}{N}\displaystyle\sum_{j=1}^N\Vert \tilde{\bm{x}}_j-\bm{x}_{t_j/\Delta t}(D_d,C_d,R_d)\Vert^2\\
\textrm{subject to} & D_d\preceq 0.\\
\end{array}
\end{equation}

\section{Numerical Examples}\label{sec: examples}
In this section, {we present results from several  numerical experiments, which is organized as follows: First, }we  use data generated by linear FOMs first to show the importance of model stability and highlight the robustness of our method. Next, we apply our methods to the data generated by two FSI problems. The dynamics of the structures in both FSI problems are stable. In the first FSI problem, the structure settles to a stationary equilibrium state. The second one has a periodic motion. In both cases, the ROMs are able to learn the stable long term behavior from data.

All data sets are generated using Matlab R2020b. Reduced-order models are implemented by RNNs in \textit{Tensorflow} and trained on a single GTX 1660 GPU.

\subsection{Linear FOMs}\label{sec: ex_l}
We start with linear FOMs adopted from the Eady example \cite{Chahlaoui2002mrbenchmark},  a benchmark problem in  model reduction,
\begin{align}\label{eq: eady}
\dot{\bm{z}}(t) = & A\bm{z}(t)+\bm{b}u(t), \notag \\
y(t) = & \bm{c}\bm{z}(t),
\end{align}
with stable $A$ and $\bm{b}=\bm{c}^T$. We use matrices from the Eady example to construct FOMs in the form of \eqref{eq: fsi_full}. The solution data of $\bm{x}(t)$ is used to train ROMs. 

\subsubsection{Importance of stability}\label{sec: ex_l_s}
The FOM \eqref{eq: eady} takes the form of \eqref{eq: fsi_full} with $W$ and $L$ corresponding to $A$ and $\bm{b}$ in \eqref{eq: eady}, respectively. In this case, $p=598$ and $l=1$. We choose $f(x(t))=-x(t)$. This is a linear system whose eigenvalues have negative real parts, thus stable.

The data set consists of $80$ time series of $\lbrace (t_j,\tilde{\bm{x}}_j): \ 0 \leq j \leq 150\rbrace$ with evenly sampled $t_j=j\Delta t$. Each time series data is the solution of the FOM with the initial conditions $\bm{z}(0)=\bm{0}$ and $x(0)$ randomly sampled in the interval $[-1, 1]$. The data set is split into training/validation/testing sets with percentages given  by $70\%$, $20\%$, $10\%$, respectively. Only the first $15$ steps of each time series in the training and validation set will be used.

Our first two numerical experiments aim at examining the stability.
First, we train a discrete model based on the forward Euler scheme instead of the proposed discrete model in \eqref{eq: linear_mid_discrete}. This discretization yields the following discrete model 
\begin{equation}\label{eq: exp_mat}
\begin{bmatrix}
\bm{z}_{i+1}\\ 
x_{i+1} 
\end{bmatrix}
= 
\begin{bmatrix}
I+D_d+S_d & -R_d \\
R_d^T & I
\end{bmatrix}
\begin{bmatrix}
\bm{z}_i\\   
x_i 
\end{bmatrix},
\end{equation}
which has diagonal $D_d\preceq 0$ and skew-symmetric $S_d$. This discrete model is conditionally stable. We train the discrete model in the first 15 steps. As shown in Fig.~\ref{fig: exp_unstable}, the residual error is very small, indicating a very good fit. But the model prediction quickly diverges from the exact data and blows up. In fact, the spectral radius of the matrix on the right side of \eqref{eq: exp_mat} is greater than $1$,  which implies that the discrete system is unstable.

\begin{figure}
    \centering
    \includegraphics[width=\columnwidth]{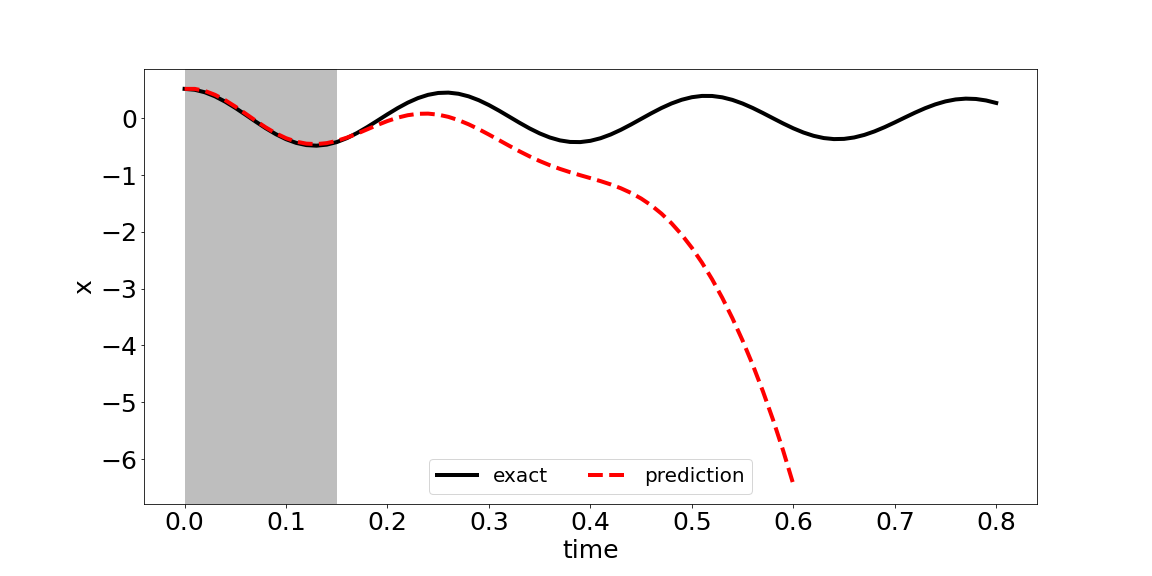}
    \caption{Example \ref{sec: ex_l_s}: A discrete model may be unstable even if it corresponds to a stable continuous model. The shaded area is the training data in the first 15 steps where the model performs well.}
    \label{fig: exp_unstable}
\end{figure}

Next, we train a discrete model in the form of \eqref{eq: linear_mid_discrete} but does not require $D_d\preceq 0$. Therefore, it is an implicit mid-point discretization of an ODE that is not necessarily stable. Therefore the stability of the discrete model itself is not guaranteed. Fig.~\ref{fig: mid_unstable} shows the comparison of the exact data and the model prediction on the same testing time series as in Fig.~\ref{fig: exp_unstable}. Compared to  \eqref{eq: exp_mat}, the model makes good predictions over a longer time period. However, the spectral radius of the iteration matrix of this discrete model is still larger than $1$, making it unstable.

\begin{figure}
    \includegraphics[width=\columnwidth]{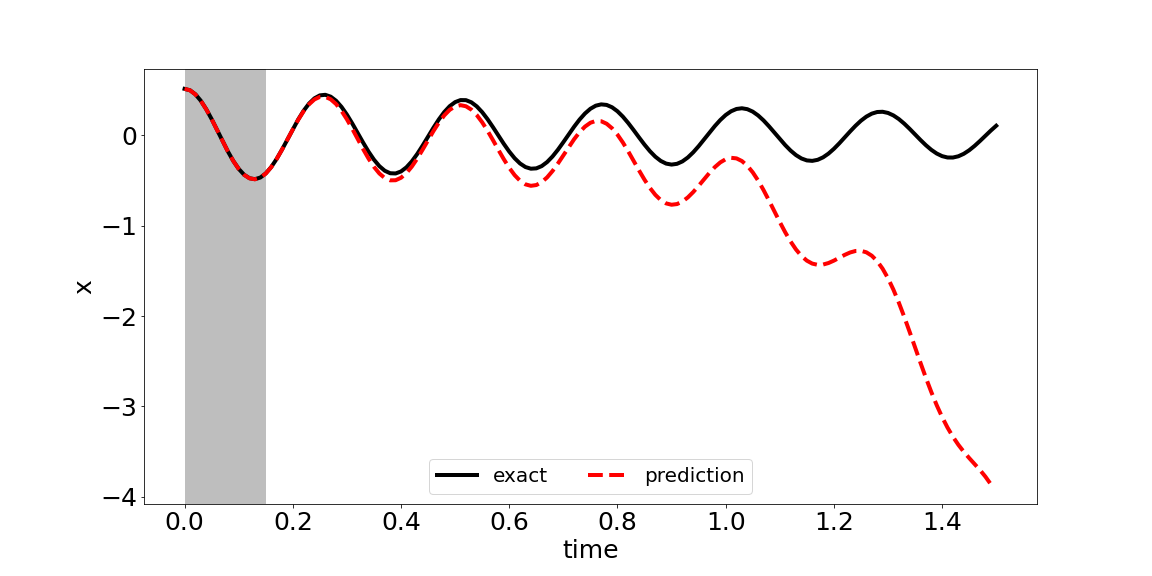}
    \caption{Example \ref{sec: ex_l_s}: Testing result of the model without $D_d\preceq 0$. A mid-point discretization may be unstable if it corresponds to an unstable continuous model. The shaded area covers the first 15 steps.}
    \label{fig: mid_unstable}
\end{figure}

On the other hand, our model \eqref{eq: linear_mid_discrete} with the constraint $D_d\preceq 0$ is stable since it satisfies theorem \ref{thm: gas_lf}. For the same testing time series as above, we observe that our model makes accurate long-term prediction and is theoretically guaranteed to converge to the same equilibrium state, 0, as the FOM.

\begin{figure}
    \centering
    \includegraphics[width=\columnwidth]{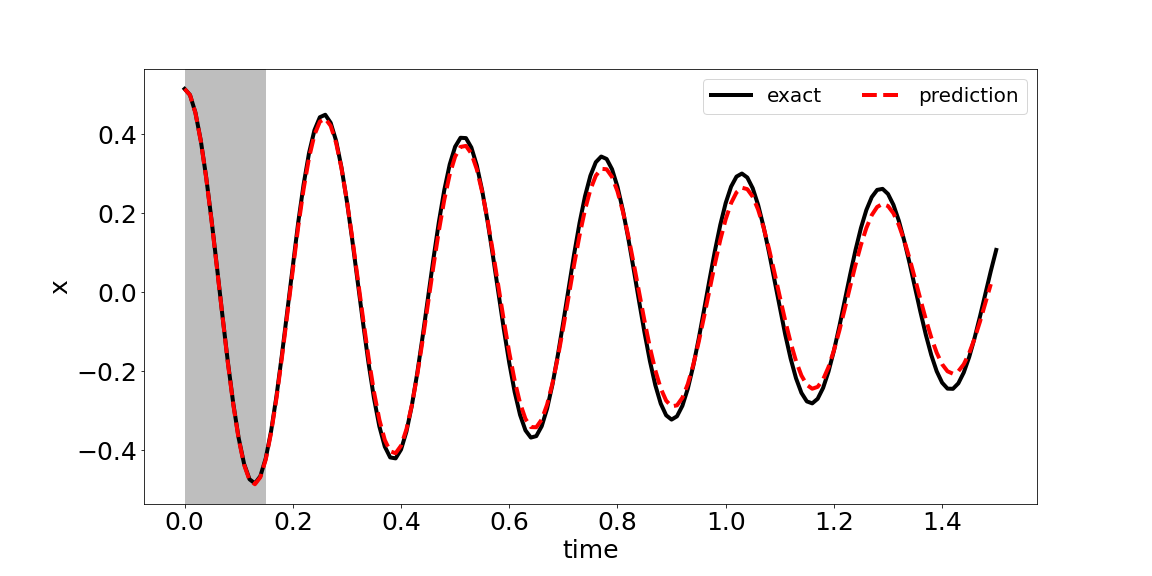}
    \caption{Example \ref{sec: ex_l_s}: Our model with stability condition makes accurate prediction on both the dynamics and the equilibrium state. The shaded area covers the first 15 steps.}
    \label{fig: imp_stable}
\end{figure}

\subsubsection{Robustness and efficiency of recurrent cell implementation}\label{sec: ex_l_m}
We now consider an FOM of the form \eqref{eq: fsi_full} with $l=16$. We still let $\bm{f}(\bm{x}(t))=-\bm{x}(t)$ similar to Section \ref{sec: ex_l_s}. $W$ is equal to matrix $A$ of \eqref{eq: eady}, but $L$ is a matrix rather than the vector $\bm{b}$  since $\bm{x}(t)$ is now a vector. We choose each column of $L$ by randomly shuffling entries of $\bm{b}$. It has been verified that all eigenvalues of matrix 
\begin{equation}
    \begin{bmatrix}
    A & -L\\
    L^T & 0
    \end{bmatrix}
\end{equation} 
have negative real parts so that the FOM is stable. The data set is generated in exactly the same way as in Section \ref{sec: ex_l_s}. 

Instead of training a discrete ROM, one could try directly training a continuous stable system \eqref{eq: data_linear} using neural ODE \cite{Chen_torchdiffeq_2021}. In this example, we compare the training efficiency and robustness of our method with neural ODE.  

Our approach yields a stable discrete ROM that learns from the first 15 steps and makes accurate long-term prediction (see Fig.~\ref{fig: mimo_proj}). It is also guaranteed that $\bm{x}$ vanishes as $t\to + \infty$.

\begin{figure}
    \centering
    \includegraphics[width=\columnwidth]{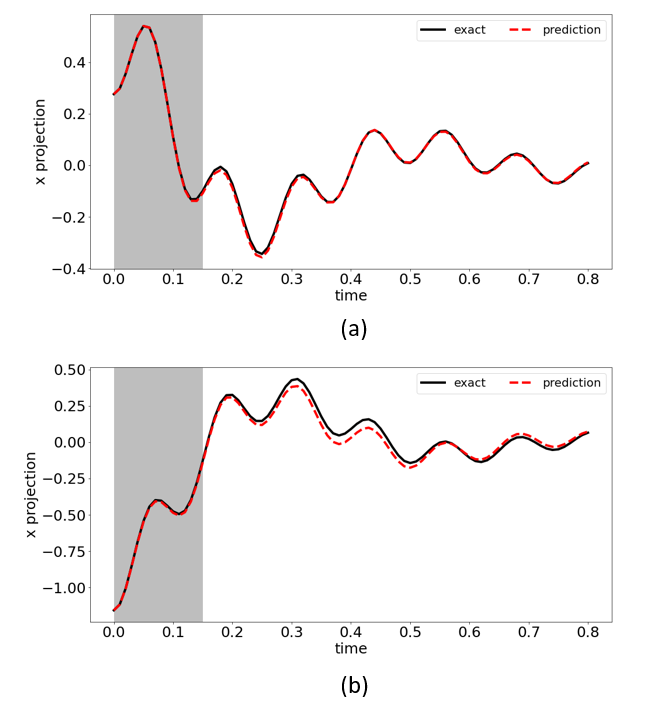}
    \caption{Example \ref{sec: ex_l_m}: Comparison between model prediction and the true data on testing time series. For visualization, the top and bottom figures plot scalar time series $\langle \bm{x}_i, \bm{v}_a\rangle$ and $\langle \bm{x}_i, \bm{v}_b\rangle$, respectively, where $\bm{v}_a$ and $\bm{v}_b$ are two distinct randomly sampled unit vectors. The shaded area covers the first 15 steps.}
    \label{fig: mimo_proj}
\end{figure} 

On the other hand, we could not successfully train a continuous model using neural ODE. The training always stops prematurely due to the ODE model \eqref{eq: data_linear} being stiff. This makes training more expensive, but less robust. In our experiments, it takes 0.5 seconds on average for an neural ODE to train one iteration, until an underflow error pops up. Our model trains much faster using back propagation through time (BPTT), taking 0.003 second per iteration on average. The training loss is recorded in Fig.~\ref{fig: mimo_logloss} from one experiment where we were able to manage to train 200 iterations with neural ODEs. We observe that when our model has already found a good discrete model, loss of the neural ODE has barely decreased. 

\begin{figure}
    \centering
    \includegraphics[width=\columnwidth]{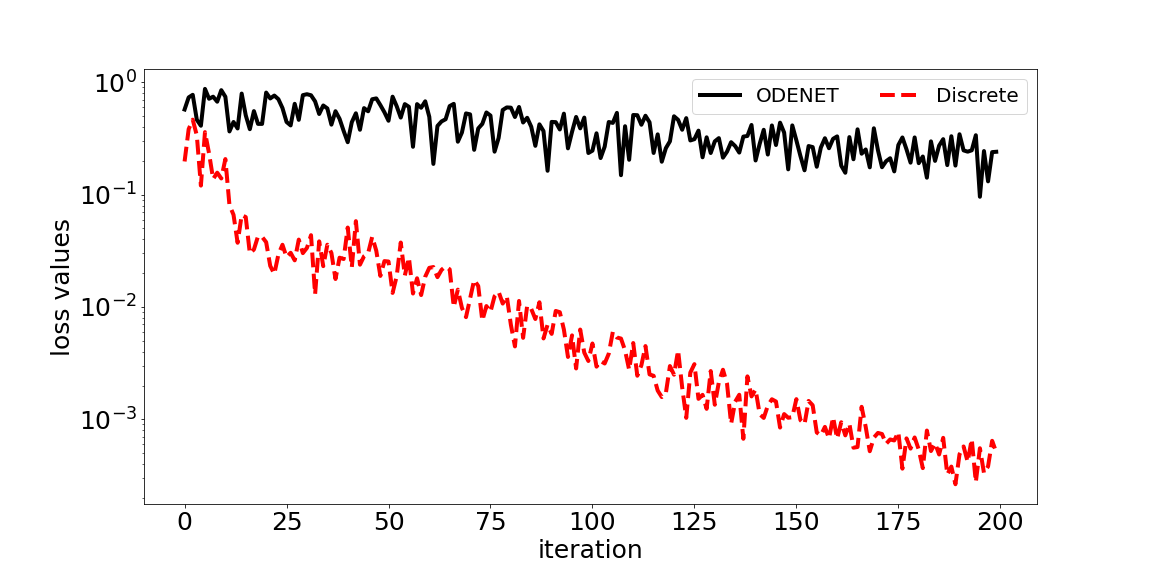}
    \caption{Example \ref{sec: ex_l_m}: The training loss of our model and a neural ODE. Weights are initialized by the Glorot initializer \cite{glorot2010understanding}. ADAM optimizer is used with learning rate $10^{-2}$.}
    \label{fig: mimo_logloss}
\end{figure}

\subsection{Rubber band oscillation}\label{sec: ex_rub}
The oscillation of a pressurized rubber band is a classic FSI example modeled by the immersed boundary method \cite{peskin2002immersed, battistaib2d}. The system consists of a stretched elastic rubber band, located in the center of a square fluid domain, with periodic boundary conditions on all edges. The resistance to stretching between successive structure points is achieved by modeling the connections with Hookean springs of resting length $R_L$ and spring stiffness $k_S$. If the virtual spring displacement is below or beyond $R_L$, the model will drive the system back towards a lower energy state. The elastic potential energy for a Hookean spring is given by
$ E_{spring}(\bm{x}) = \frac{1}{2}k_s\left(\Vert \bm{x}-\bm{x}_s\Vert - R_L\right)^2,$
where $\bm{x}_s$ represents the neighboring node coordinates of $\bm{x}$. The corresponding force is given by the gradient,
\begin{equation}\label{eq: f_spring}
    \bm{f}_{spring}(\bm{x})=-\nabla E_{spring}(\bm{x})=k_s\left(1-\frac{R_L}{\Vert \bm{x}-\bm{x}_s\Vert}\right)(\bm{x}_s-\bm{x}).
\end{equation}
Since the rubber band is stretched initially, it will start oscillating and eventually settle into an equilibrium state, as the fluid becomes stationary. In the case of $R_L=0$, \eqref{eq: f_spring} is linear. The zero resting length also means that the rubber band shrinks to a single point. The result of this case is shown in Section \ref{sec: ex_rub_l}. In Section \ref{sec: ex_rub_nl}, we train an ROM and test it for the case of nonzero $R_L$. This yields a circular equilibrium shape with nonzero radius for the rubber band, as well as nonlinear force function given by \eqref{eq: f_spring}.

For both cases, we sample 32 points on the structure, hence $l=64$. The spring stiffness is set to $k_s=2.5\times10^4$. Data is obtained by solving the fluid-structure interaction using a Matlab library \textit{IB2D} \cite{battistaib2d}. The data set contains one time series sampled with $\Delta t=10^{-3}$. The dimension of the latent state is set to $p=64$, same as the dimension of $\bm{x}$. We use the first 150 steps for training. The trained ROM is tested against the exact data from  later dynamics and the equilibrium state.

\subsubsection{Linear force ($R_L=0$)}\label{sec: ex_rub_l}
The training converges in less than 200 iterations (see Fig.~\ref{fig: rbl_loss}). To show the performance of the trained model, shapes of the rubber band at different time steps are compared (see Fig.~\ref{fig: rbl_snaps}). Small relative error of the rubber band perimeter (less than 3\%) is observed (see Fig.~\ref{fig: rbl_pm}). 

Due to the linearity of the force in this case, $J_f(\bm{x}_i)= T\preceq 0$ is a constant matrix, independent of $\bm{x}_i$. Subsequently, the ROM is a linear discrete model. It follows that the model predicted equilibrium state is equal to the initial values $\bm{x}(0)$ projected onto the kernel of $T$, which agrees with the data. 

\begin{figure}
    \centering
    \includegraphics[width=\columnwidth]{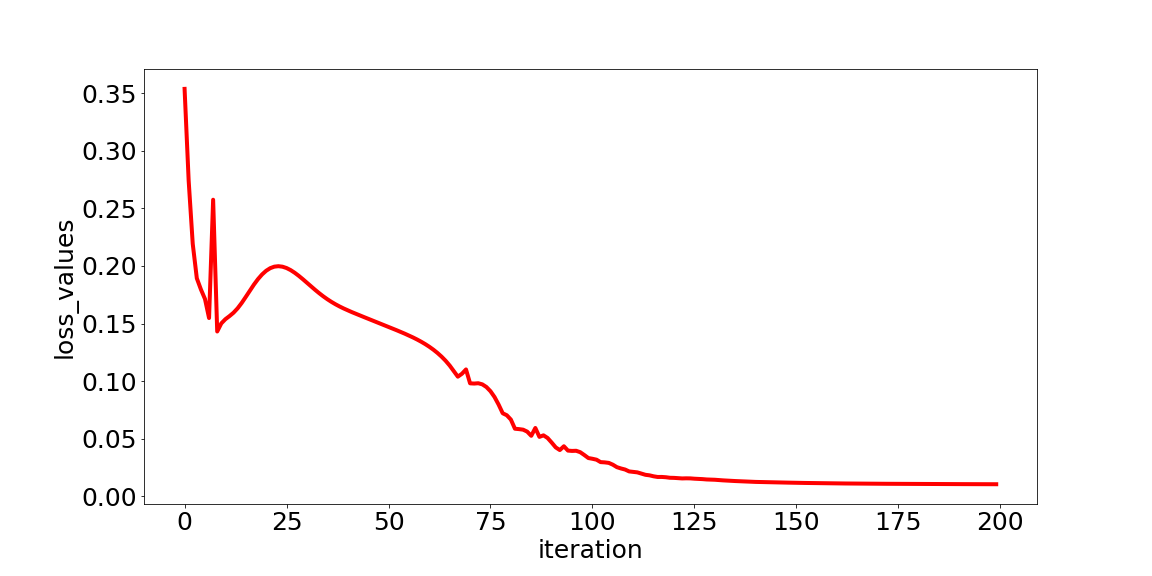}
    \caption{Example \ref{sec: ex_rub_l}: {Rubber band example with $R_L=0$.} Training loss v.s. \# of iterations. Weights are initialized by the Glorot initializer. ADAM optimizer is used with learning rate $10^{-3}$.}
    \label{fig: rbl_loss}
\end{figure}

\begin{figure}
    \centering
    \includegraphics[width=.9\columnwidth]{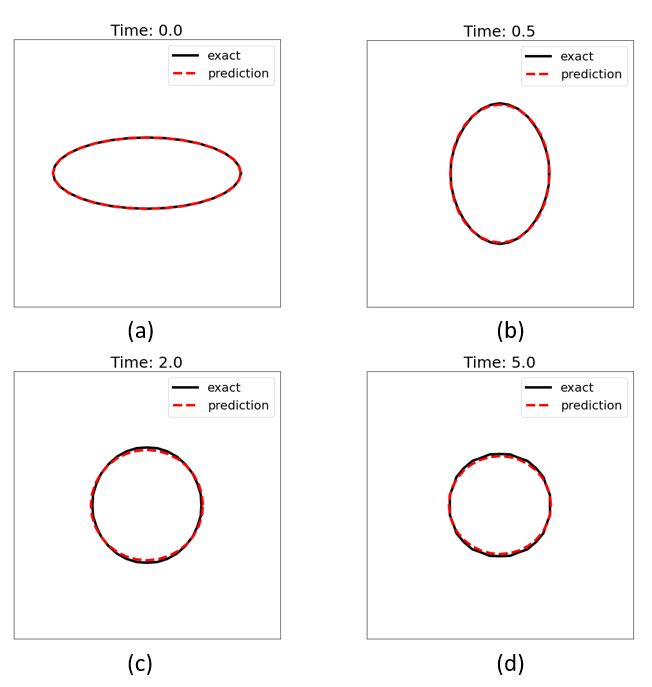}
    \caption{Example \ref{sec: ex_rub_l}: {Rubber band oscillation with $R_L=0$.} Snapshots of the rubber band at different time steps. (a), (b): training fit. (c), (d): comparisons of the testing phase.}
    \label{fig: rbl_snaps}
\end{figure}

\begin{figure}
    \centering
    \includegraphics[width=\columnwidth]{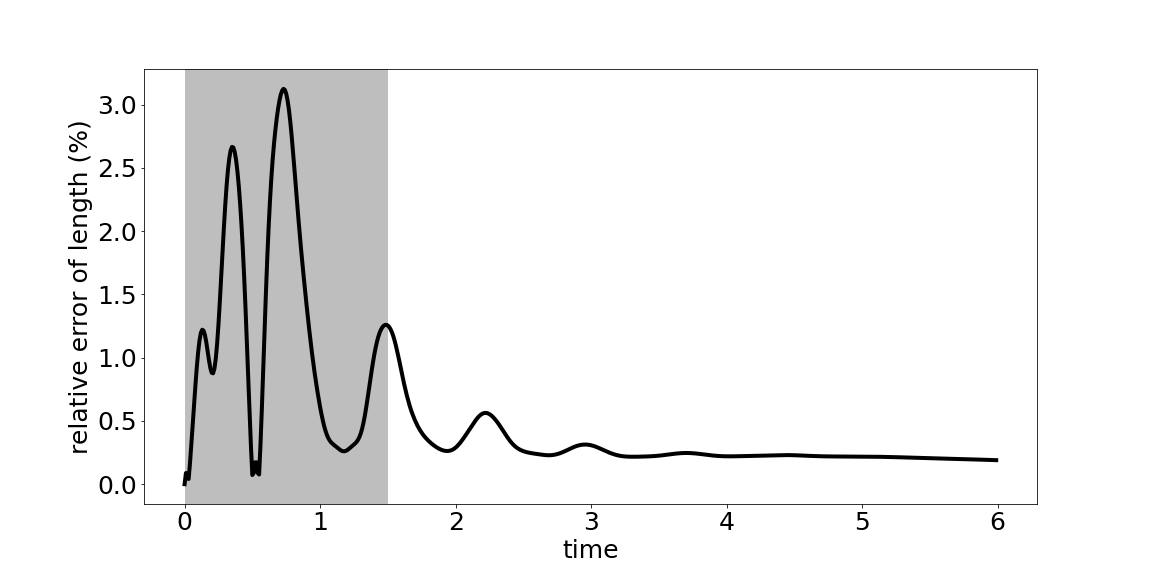}
    \caption{Example \ref{sec: ex_rub_l}: {Rubber band oscillation with $R_L=0$.} Relative error in perimeters against time steps. The shaded area covers the first 150 steps.}
    \label{fig: rbl_pm}
\end{figure}

\subsubsection{Nonlinear force ($R_L\neq0$)}\label{sec: ex_rub_nl}
We set $R_L$ to be the smallest one among all virtual springs connecting the structure points at the beginning, so that the rubber band is initially stretched everywhere. The training converges in less than 200 iterations (see Fig.~\ref{fig: rbnl_loss}). Configurations of the rubber band at different time steps are compared in Fig.~\ref{fig: rbnl_snaps}. The relative error in the rubber band perimeter never exceeds 1\% (see Fig.~\ref{fig: rbnl_pm}).

\begin{figure}
    \centering
    \includegraphics[width=\columnwidth]{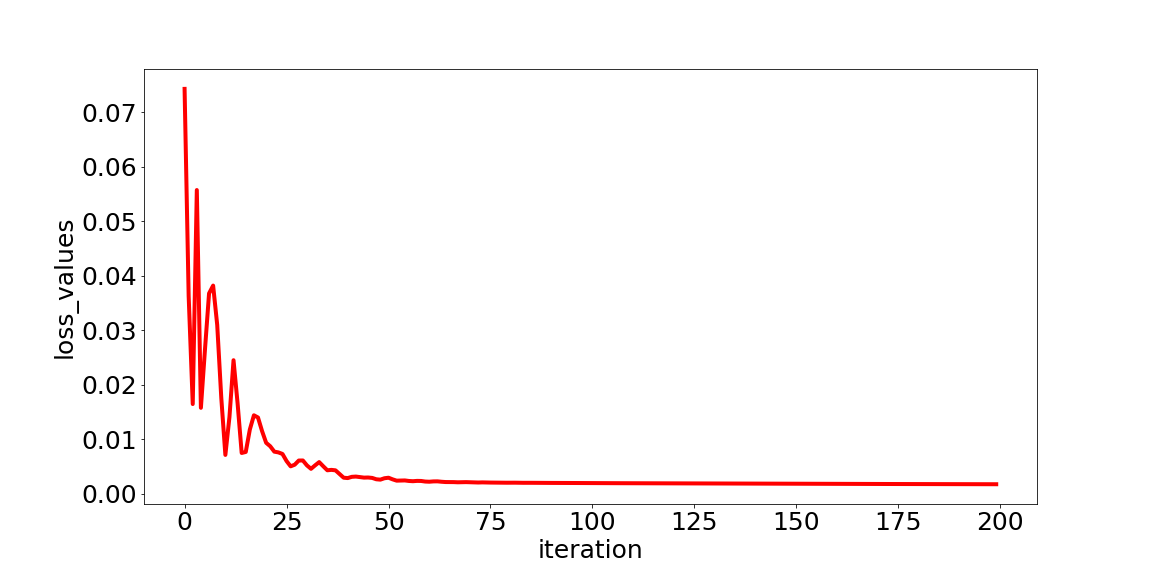}
    \caption{Example \ref{sec: ex_rub_nl}: {Rubber band oscillation with $R_L\neq0$.} Training loss of the case $R_L\neq 0$. Weights are initialized by the Glorot initializer. ADAM optimizer is used with learning rate $10^{-3}$.}
    \label{fig: rbnl_loss}
\end{figure}

\begin{figure}
    \centering
    \includegraphics[width=.9\columnwidth]{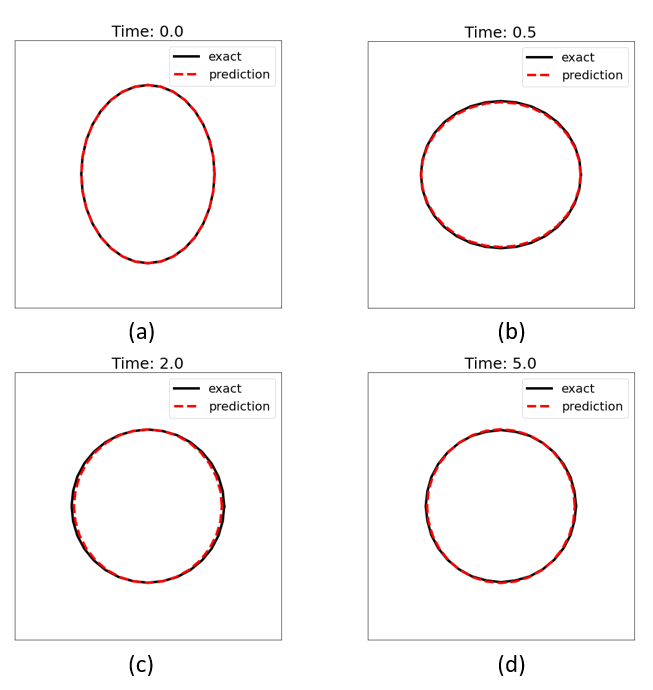}
    \caption{Example \ref{sec: ex_rub_nl}: {Oscillating rubber band with $R_L\neq0$.} Snapshots of the rubber band at different time steps. (a), (b): training fit. (c), (d): comparisons of the testing phase.}
    \label{fig: rbnl_snaps}
\end{figure}

\begin{figure}
    \centering
    \includegraphics[width=\columnwidth]{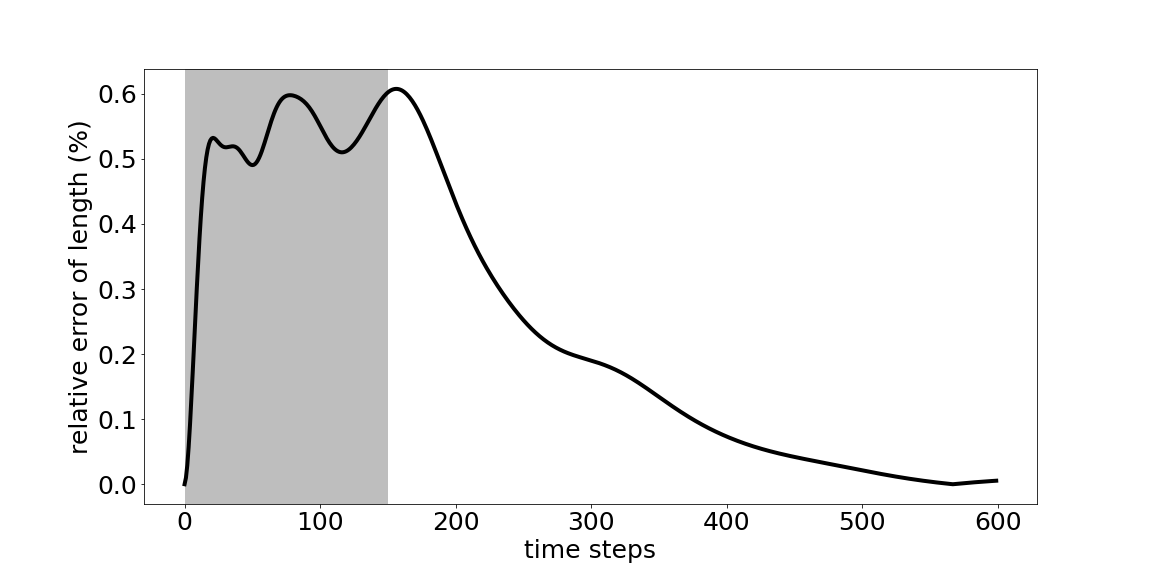}
    \caption{Example \ref{sec: ex_rub_nl}: {Oscillating rubber band with $R_L\neq0$.} Relative error in perimeters against time steps. The shaded area covers the first 150 steps.}
    \label{fig: rbnl_pm}
\end{figure}

\subsection{Ellipsoid rotation}\label{sec: ex_rot}
In the rubber band example, models with stationary equilibrium states are learned from data. Our reduced-order modeling technique is also able to learn ROMs for systems exhibiting periodic motions. In this example, we simulate the rotation of a rigid ellipsoid in a shear flow, which has been studied as a benchmark example \cite{hao2015fictitious} in FSI. Fig.~\ref{fig: orbdem} illustrates the problem setup, where $a$ and $b$ are the semi-major and semi-minor axis, respectively; $r$ is the shear rate of the background flow. In Section \ref{sec: ex_roto}, we study the case of $u_0=0$ such that the ellipsoid rotates without translation. In Section \ref{sec: ex_rott}, $u_0$ is a nonzero constant for all $y$ so that the ellipsoid is rotating with its center moving horizontally. The semi-major axis is initially along the $y$-axis.

\begin{figure}
    \centering
    \includegraphics[width=\columnwidth]{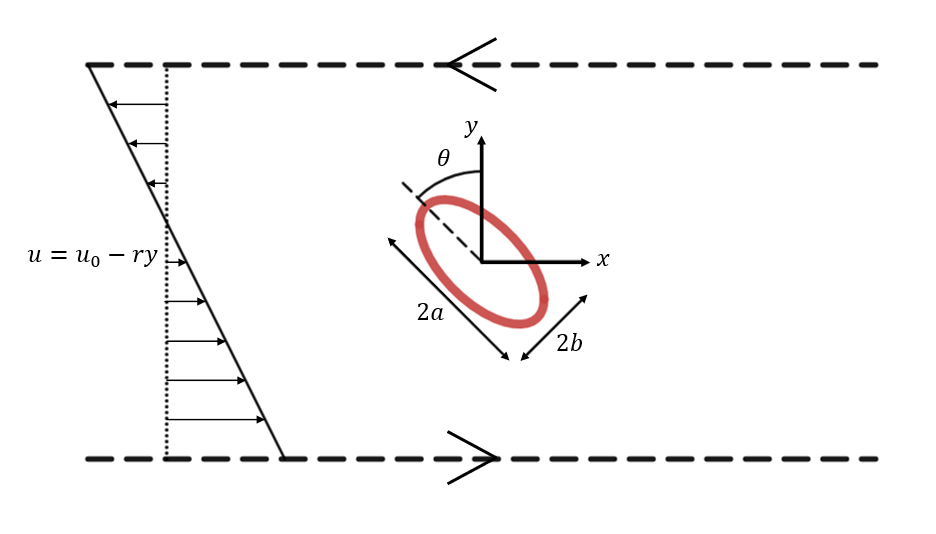}
    \caption{Example \ref{sec: ex_rot}: A rigid ellipse immersed in a shear flow.}
    \label{fig: orbdem}
\end{figure}

It has been shown that the instantaneous inclination angle $\theta$ of the major axis with respect to the $y$-axis is
\begin{equation}\label{eq: orbittrue}
    \tan(\theta) = \frac{a}{b}\tan\left(\frac{ab}{a^2+b^2}rt\right),
\end{equation}
where $t$ is the time variable \cite{jeffery1922motion}. 

To preserve the elliptic shape of the rigid structure, the body force in this example is from a discrete bending energy \cite{pivkin2008accurate},
\begin{equation}
E_b = \sigma_b \sum_{i=1}^{n_s}(1-\cos(\omega_i-\omega_i^0)),
\end{equation}
where $\omega_i^0$ is the initial angle between the adjacent edges with the $i$-th Lagrangian grid point, $\omega_i$ is the current angle, $n_s$ is the number of Lagrangian grid points and $\sigma_b$ is the bending coefficient. The bending force generated on each structure point is given by, 
\begin{equation}
f_i = ({f_i} _x,{f_i} _y) = (-\frac{\partial E_b}{\partial x_i},-\frac{\partial E_b}{\partial y_i} ).
\end{equation}

We sample 32 points on the ellipsoid ($l=64$). Data is computed directly from the analytical solution \eqref{eq: orbittrue}. The data set contains one time series sampled with $\Delta t=10^{-2}$. A model with latent dimension $p=20$ is enough to capture the periodic motion. The first 3 quarters of a rotational period (150 steps, specifically) are used for training. The trained ROM is tested against the exact data on the following time steps till the end of the second period.

\subsubsection{Rotation only ($u_0=0$)}\label{sec: ex_roto}
Snapshots of the ellipsoid are plotted to show the fitting quality and prediction accuracy (see Fig.~\ref{fig: roto_snaps}. We also compare the rotation angle given by the ROM with the analytical solution (see Fig.~\eqref{eq: orbittrue}). The ROM successfully learns and predicts the periodic dynamics. Since we know that $\Delta t=10^{-2}$, we could obtain the continuous model corresponding to the discrete ROM. The eigenvalues of the approximate continuous model is shown in Fig.~\ref{fig: roto_eigs}, with real parts in the range $[-1.56\times 10^{-4}, 0]$. Therefore, the system exhibits mostly periodic motion with very slow decay.

\begin{figure}
    \centering
    \includegraphics[width=.9\columnwidth]{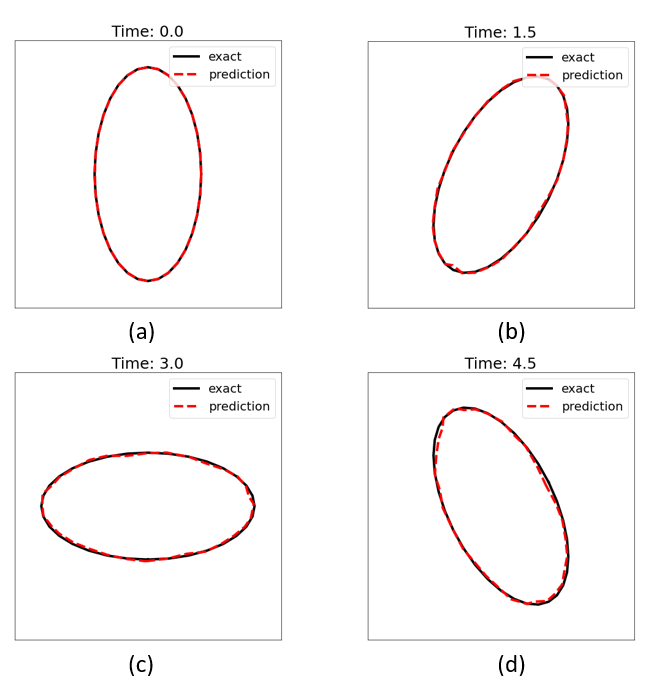}
    \caption{Example \ref{sec: ex_roto}: The dynamics of a rotating ellipsoid. Shapes of the ellipsoid at different time steps.
    (a), (b): training fit. (c), (d): testing phase.}
    \label{fig: roto_snaps}
\end{figure}

\begin{figure}
    \centering
    \includegraphics[width=\columnwidth]{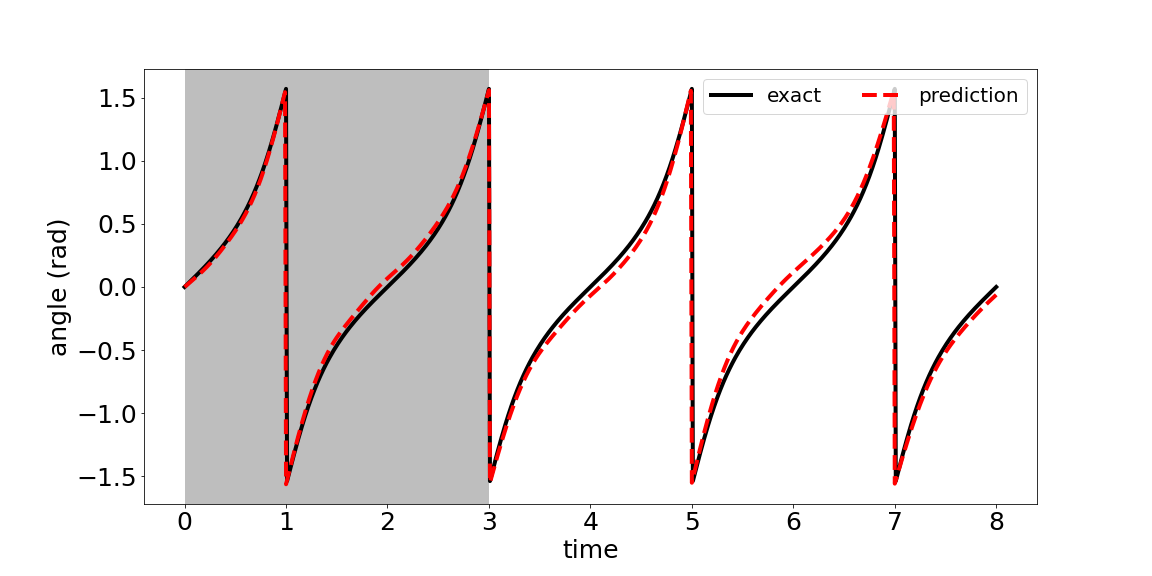}
    \caption{Example \ref{sec: ex_roto}: The dynamics of a rotating ellipsoid. Plot of inclination angles against time steps. Shaded area indicates the 150 time steps used for training.}
    \label{fig: roto_ang}
\end{figure}

\begin{figure}
    \centering
    \includegraphics[width=\columnwidth]{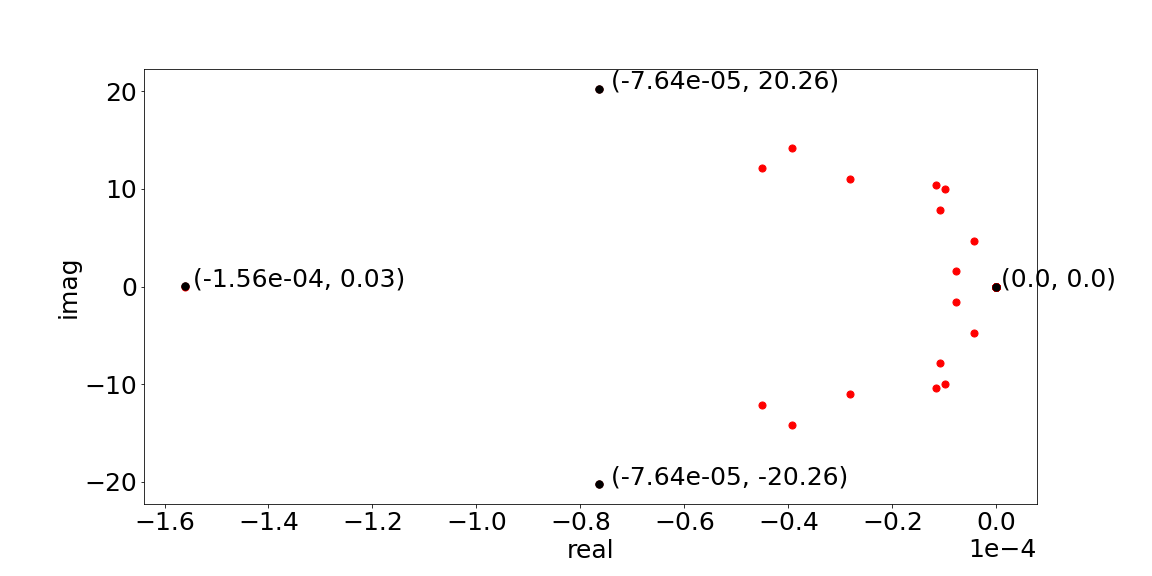}
    \caption{Example \ref{sec: ex_roto}: The dynamics of a rotating ellipsoid. Eigenvalues of the approximate continuous model corresponding to the trained ROM.}
    \label{fig: roto_eigs}
\end{figure}

\subsubsection{Rotation and translation ($u_0\neq 0$)}\label{sec: ex_rott}
In this case, we add another trainable vector parameter $\bm{z}_0$ to \eqref{eq: out_up} so that 
\begin{equation}\label{eq: out_up_c}
    \bm{x}_{i+1} = \bm{x}_i + \frac{R_d^T}{2}(\bm{z}_{i+1}+\bm{z}_i+2\bm{z}_0).
\end{equation}
The added parameter $\bm{z}_0$ is intended to capture the the effect of $u_0$. Since the latent variable $\bm{z}$ is acting as the virtual fluid variables around the structure, $\bm{z}_i-\bm{z}_0$ in this case follows closely the dynamics of $\bm{z}_i$ in the non translational case.

As before, we plot the snapshots of the ellipsoid (Fig.~\ref{fig: rott_snaps}) and the rotation angles (Fig.~\ref{fig: rott_ang}) to show the goodness of fit. In addition, we compare the x-coordinates of the center given by the ROM with the exact data (see Fig.~\ref{fig: rott_disp}). These numerical results show that the ROM \eqref{eq: state_up}, with \eqref{eq: out_up} replaced by \eqref{eq: out_up_c}, learns both the rotational and translational motion from data. 
\begin{figure}
    \centering
    \includegraphics[width=\columnwidth]{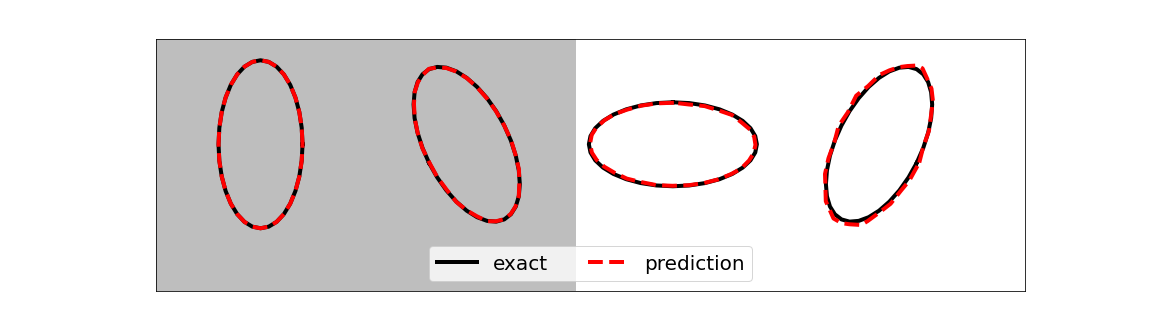}
    \caption{Example \ref{sec: ex_rott}: {Ellipsoid translational rotation.} Shapes of the ellipsoid at different time steps. Shaded area covers the interval used for training.}
    \label{fig: rott_snaps}
\end{figure}

\begin{figure}
    \centering
    \includegraphics[width=\columnwidth]{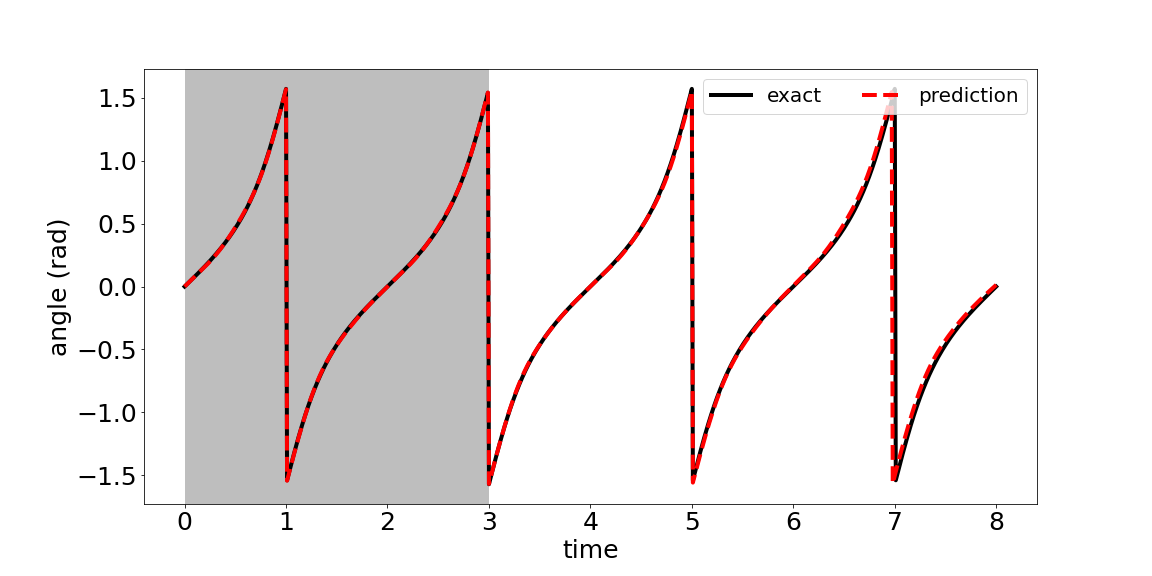}
    \caption{Example \ref{sec: ex_rott}: {Ellipsoid translational rotation.} Plot of inclination angles against time. Shaded area covers the time steps used for training.}
    \label{fig: rott_ang}
\end{figure}

\begin{figure}
    \centering
    \includegraphics[width=\columnwidth]{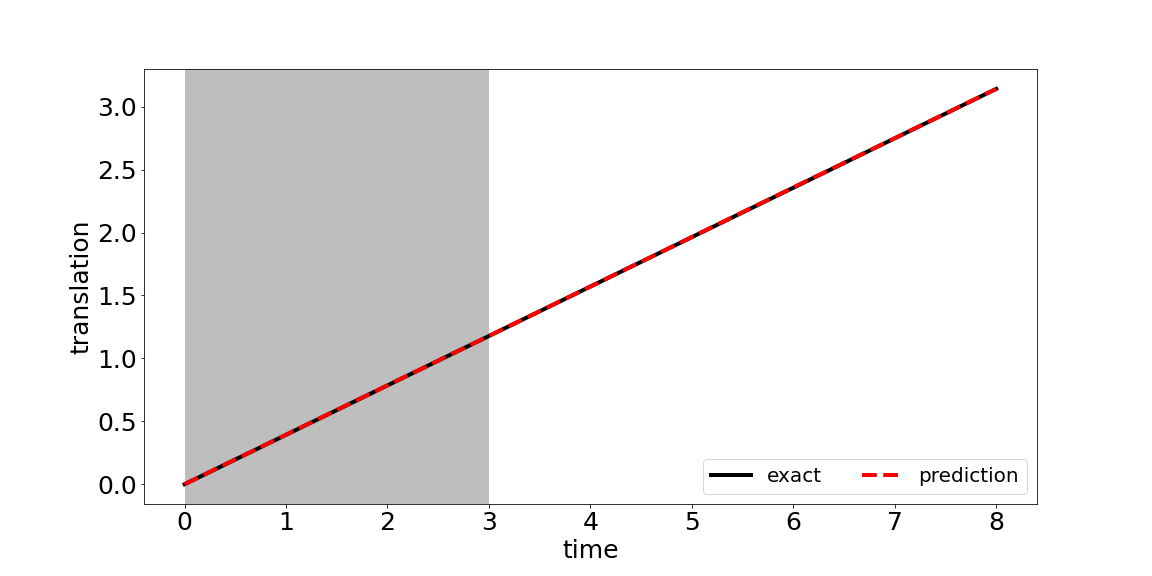}
    \caption{Example \ref{sec: ex_rott}: {Ellipsoid with both translation and rotation.} Displacement of ellipsoid center against time. Shaded area covers the time steps used for training.}
    \label{fig: rott_disp}
\end{figure}

\section{Discussion}\label{sec: conclusions}

This paper presents a data-driven modeling framework to incorporate the dynamics of latent variables. The parametric form is motivated by reduced-order modeling as well as fluid structure interaction problems.  
The proposed modeling technique provides a general framework for data-driven modeling requiring stability and coupling with unobserved quantities.   
In particular, enforcing the stability is straightforward.  Recent developments in deep learning provide an efficient constrained optimization framework to train the discrete models using recurrent cells and back propagation through time. Numerical results show that the stability conditions are essential. Using examples from fluid-structure interactions, our numerical tests indicate that relatively small number of the latent variables are already sufficient to capture the dynamics. 

It is worthwhile to point out that we tune the dimension of the latent variable following bottom-up constructions \cite{freund1999reduced,Bai2002}, i.e., they are multiples of the dimension of observed variables. However, in the last example, we observe that it is possible to have a good model, where the dimension of the latent variables is smaller  than that of the observed variable.

To increase the flexibility of our model, a simple idea is to include non-constant coupling  between the observed variable and the latent variable. Namely, one can make a variable matrix $L$ that is $\bm{x}$-dependent. Such extension does not affect the Lyapunov stability of the continuous model, as theorem \ref{thm: fullfsi_stab} would still hold. Therefore, by Theorem \ref{thm: mid_quad}, the implicit mid-point scheme inherits stability. However, since the forward propagation involves solving nonlinear equations, the back propagation of such a discrete model is not as straightforward. It may require modifications of existing software packages, e.g., Tensorflow.  
Another potential extension is to introduce nonlinearity  in the interactions of latent variables,  e.g., by neural networks. Linear stability could be achieved, provided that the nonlinear part is of higher order, e.g., see \cite[Theorem 7.1]{verhulst2006nonlinear}.

\section*{Data Availability}
The code and data that support the findings of this study are openly available on Github \cite{luo2022rnn}.

\bibliographystyle{ieeetr}
\bibliography{rom_rnn,addref}

\end{document}